\documentclass[pdflatex,sn-mathphys-num]{sn-jnl}

\usepackage[T1]{fontenc}
\usepackage[utf8]{inputenc}
\usepackage{amsmath,amssymb,amsfonts,mathtools}
\usepackage{amsthm}
\usepackage{mathrsfs}
\usepackage{enumitem}
\usepackage{microtype}

\theoremstyle{thmstyleone}%
\newtheorem{theorem}{Theorem}[section]
\newtheorem{proposition}[theorem]{Proposition}
\newtheorem{lemma}[theorem]{Lemma}

\theoremstyle{thmstylethree}%
\newtheorem{definition}[theorem]{Definition}
\theoremstyle{thmstyletwo}%
\newtheorem{remark}[theorem]{Remark}

\newcommand{\C}{\mathbb C}

\newcommand{\ip}[2]{\langle #1,#2\rangle}
\newcommand{\ol}{\overline}

\newcommand{\norm}[1]{\left\|#1\right\|}
\newcommand{\diag}{\operatorname{diag}}
\newcommand{\ord}{\operatorname{ord}}
\DeclareMathOperator{\Ric}{Ric}
\DeclareMathOperator{\Scal}{Scal}

\begin{document}

\title[Fefferman--Szegő kernels on egg domains]{Fefferman--Szegő kernels and finite-type rigidity on egg domains}

\author*[1]{\fnm{Venkata Siddharth} \sur{Pendyala}}\email{venkatasiddharthpendyala@gmail.com}

\affil*[1]{\orgname{Independent}, \orgaddress{\city{Bellevue}, \state{WA}, \country{USA}}}

\abstract{%
We compute the Fefferman boundary measure and the associated Fefferman--Szegő kernel for the egg domains
\[
E_{n,m}=\{(z,w)\in\C^{n-1}\times\C:\ |z|^2+|w|^{2m}<1\}.
\]
The kernel is given both by an orthogonal monomial expansion and by a closed form in a natural auxiliary finite-type variable; its diagonal weak-boundary exponent recovers the integer $m$.  For $n\ge2$, the associated Fefferman--Szegő metric has constant scalar curvature only in the ball case $m=1$, and the Kähler--Einstein, constant Ricci-spectrum, and Bergman-proportionality statements follow as corollaries of the same calculation.
}

\keywords{Fefferman measure, Szegő kernel, egg domain, finite type, D'Angelo type, scalar curvature, Bergman metric}

\pacs[MSC Classification]{32A25, 32F45, 32Q20, 32V35}

\maketitle

\section{Introduction}

This paper has two goals.  First, it computes the Fefferman--Szegő kernel of the egg domain
\[
 E_{n,m}=\{(z,w)\in\C^{n-1}\times\C:\ |z|^2+|w|^{2m}<1\}.
\]
Second, it uses the resulting one-variable formula to prove a rigidity theorem for the corresponding invariant Kähler metric.  The computation is the main point; the rigidity statements are consequences of it.

For $m=1$ the domain is the unit ball.  For $m>1$ the boundary is smooth but weakly pseudoconvex along $w=0$.  Thus $E_{n,m}$ is a minimal model in which a finite-type feature is present but the symmetry is still large enough to permit an exact calculation.  Fefferman's boundary density comes from the defining-function invariant in his work on the complex Monge--Ampère equation and parabolic invariant theory \cite{Fefferman1976,Fefferman1979}.  Barrett and Lee used the Szegő kernel for this measure to define a biholomorphically invariant Szegő metric \cite{BarrettLee2011}.  Related Fefferman--Szegő metric questions for strongly pseudoconvex domains and for two-dimensional eggs appear in work of Bhatnagar and Fan \cite{BhatnagarFanStrong2026,BhatnagarFanEgg2026}.

Put $k=n-1$ and
\[
 \mu_{n,m}=\frac{n+m}{m(n+1)}.
\]
The central formula is
\[
S_{n,m}((z,w),(\zeta,\eta))
=C_{n,m}\sum_{\beta=0}^{\infty}
\frac{\Gamma(k+\mu_{n,m}+\beta/m)}{\Gamma(\mu_{n,m}+\beta/m)}
\frac{(w\bar\eta)^\beta}{(1-\langle z,\zeta\rangle)^{k+\mu_{n,m}+\beta/m}}.
\]
Since $k$ is an integer, this series has a closed form after introducing
\[
 X=\frac{w\bar\eta}{(1-\langle z,\zeta\rangle)^{1/m}}.
\]
The phrase ``closed form'' is meant in this precise sense: for $m>1$ the expression is rational in $X$, not literally rational in the original holomorphic variables.

The diagonal kernel detects the finite-type exponent.  Near the weak set
\[
\Sigma_{n,m}=\{(z,0)\in\partial E_{n,m}: |z|=1\},
\]
the weak-normal blow-up exponent is
\[
\gamma_{n,m}=n-1+\frac{n+m}{m(n+1)},
\]
and this number determines $m$.  The same finite-type parameter is also the D'Angelo type of the weak boundary points, namely $2m$ \cite{DAngelo1982}.

The metric consequence is deliberately stated for the one-axis family only.  In dimensions $n\ge2$, constant scalar curvature of the Fefferman--Szegő metric forces $m=1$.  The proof restricts the metric to the weak-normal line and writes the scalar curvature as a rational function of $x=|w|^2$.  If that rational function is constant, the numerator polynomial in the kernel must be constant; a root-multiplicity argument then forces $m=1$.  This avoids any unproved coefficient-positivity assertion.

The paper is organized as follows.  Section 2 computes the Fefferman measure.  Section 3 gives the monomial norms and the Szegő kernel series.  Section 4 turns the series into a closed form in the auxiliary variable and locates the boundary singularities.  Section 5 extracts the weak-boundary exponent and records the boundary-regular inverse statement.  Section 6 proves the scalar-curvature rigidity theorem and its Kähler--Einstein, Ricci-spectrum, and Bergman-proportionality corollaries.

\section{The domains and the normalized Fefferman measure}

Fix integers $n\ge 2$ and $m\ge 1$.  Put $k=n-1$ and
\[
E_{n,m}=\{(z,w)\in\C^k\times\C:\ |z|^2+|w|^{2m}<1\}.
\]
We use the defining function
\[
\rho(z,w)=|z|^2+|w|^{2m}-1.
\]
Let $d\sigma$ denote Euclidean hypersurface measure on $\partial E_{n,m}$ and let $|\nabla \rho|$ be the Euclidean length of the real gradient.  We use the standard defining-function invariant Fefferman density
\begin{equation}\label{eq:feffmeasure}
 d\mu_F=M(\rho)^{1/(n+1)}\frac{d\sigma}{|\nabla \rho|},
\end{equation}
where
\begin{equation}\label{eq:MongeAmpere}
M(\rho)=-\det
\begin{pmatrix}
0&\rho_{\bar 1}&\cdots&\rho_{\bar n}\\
\rho_1&\rho_{1\bar 1}&\cdots&\rho_{1\bar n}\\
\vdots&\vdots&\ddots&\vdots\\
\rho_n&\rho_{n\bar 1}&\cdots&\rho_{n\bar n}
\end{pmatrix}
\end{equation}
on $\partial E_{n,m}$.  Multiplying $d\mu_F$ by a positive constant only multiplies the reproducing kernel by the reciprocal constant and does not change the associated Kähler form $\sqrt{-1}\partial\bar\partial\log S(z,z)$.  We therefore keep the normalization in \eqref{eq:feffmeasure}; all constants below correspond to it.

For $\xi\in S^{2k-1}\subset\C^k$, $0\le r\le 1$, and $\theta\in\mathbb R/2\pi\mathbb Z$, set
\begin{equation}\label{eq:param}
\Phi(\xi,r,\theta)=\left((1-r^{2m})^{1/2}\xi,\,re^{i\theta}\right).
\end{equation}
This parametrizes $\partial E_{n,m}$ up to the measure-zero set $\{z=0\}$.

\begin{proposition}[Fefferman density on $E_{n,m}$]\label{prop:density}
On $\partial E_{n,m}$ one has
\begin{equation}\label{eq:Mvalue}
M(\rho)=m^2|w|^{2m-2}.
\end{equation}
Consequently, with $a=2(m-1)/(n+1)$,
\begin{equation}\label{eq:pullbackmeasure}
\Phi^*d\mu_F=\frac{m^{2/(n+1)}}{2}\,r^{a+1}(1-r^{2m})^{k-1}\,d\sigma_{S^{2k-1}}(\xi)\,d\theta\,dr .
\end{equation}
\end{proposition}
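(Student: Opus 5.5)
The plan has two parts: evaluate the bordered determinant \eqref{eq:MongeAmpere} using the diagonal structure of the complex Hessian of $\rho$, then identify $d\sigma/|\nabla\rho|$ with the Leray residue form of $\rho$ and pull it back by $\Phi$ from \eqref{eq:param}.

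\emph{Step 1: the Monge--Ampère invariant.} With coordinates $(z_1,\dots,z_k,w)$ we have
\[
\rho_{z_j}=\bar z_j,\qquad \rho_w=m\,w^{m-1}\bar w^{m},
\]
and the complex Hessian is $H=\diag(1,\dots,1,\,m^2|w|^{2m-2})$. For invertible $H$ the Schur complement identity gives
\[
\det\begin{pmatrix}0&b^*\\ b&H\end{pmatrix}=-\det H\,(b^*H^{-1}b).
\]
Applied here it yields
\[
M(\rho)=\det H\sum_j\frac{|\rho_j|^2}{H_{j\bar j}}
= m^2|w|^{2m-2}\Bigl(|z|^2+\frac{m^2|w|^{4m-2}}{m^2|w|^{2m-2}}\Bigr)
= m^2|w|^{2m-2}\bigl(|z|^2+|w|^{2m}\bigr).
\]
On $\partial E_{n,m}$ the last factor equals $1$, which gives \eqref{eq:Mvalue}.

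The only delicate point is the set $w=0$ when $m>1$, where $H$ is singular. Both sides of the unrestricted identity are polynomials in $(z,\bar z,w,\bar w)$, so the identity extends to $w=0$ by continuity. Alternatively, one can expand the determinant directly by cofactors along the last row and column, since $H$ is diagonal.

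\emph{Step 2: the measure.} I will use the fact that $d\sigma/|\nabla\rho|$ is the Leray form $\lambda$, characterized on $\partial E_{n,m}$ by $dV=d\rho\wedge\lambda$. Write $z=s\xi$ with $s>0$ and $\xi\in S^{2k-1}$, and $w=re^{i\theta}$. Then
\[
dV=s^{2k-1}\,ds\,d\sigma_{S^{2k-1}}(\xi)\;r\,dr\,d\theta .
\]
Since $\rho=s^2+r^{2m}-1$, modulo $dr$ we have $d\rho\equiv 2s\,ds$. Hence the Leray form restricted to the boundary, parametrized by $(\xi,r,\theta)$, is
\[
\tfrac12 s^{2k-2}\,r\,d\sigma_{S^{2k-1}}\,dr\,d\theta
\quad\text{with } s^2=1-r^{2m}.
\]
This equals $\tfrac12 r(1-r^{2m})^{k-1}\,d\sigma\,dr\,d\theta$. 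Because $k\ge1$, the exponent $k-1$ is nonnegative and no singularity appears.

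Multiplying by $M(\rho)^{1/(n+1)}=m^{2/(n+1)}r^{(2m-2)/(n+1)}=m^{2/(n+1)}r^{a}$ gives \eqref{eq:pullbackmeasure}.

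I expect no real obstacle. The points needing care are the singular Hessian on the weak set, handled by the continuity argument above, and the justification that the Leray form equals $d\sigma/|\nabla\rho|$, which is the standard coarea identity. The excluded set $\{z=0\}$, i.e.\ $r=1$, has measure zero and can be ignored.
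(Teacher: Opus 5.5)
Your proposal is correct. Step 1 is the same as the paper's argument: the Schur complement for the diagonal Hessian, followed by continuity across $w=0$. Your remark that both sides are polynomials in $z,\bar z,w,\bar w$ is a slightly cleaner way of making the continuity step.

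Step 2 takes a different route. The paper computes the induced Euclidean surface element directly from the orthogonal frame of $\Phi$. This gives $R^{2k-1}r(1+R'^2)^{1/2}=R^{2k-2}r\,(R^2+m^2r^{4m-2})^{1/2}$ with $R=(1-r^{2m})^{1/2}$. The paper then computes $|\nabla\rho|=2(R^2+m^2r^{4m-2})^{1/2}$ separately and observes that the square-root factors cancel.

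Your Leray-form identity $dV=d\rho\wedge\lambda$ makes that cancellation automatic. The $dr$-component of $d\rho$ is killed by the $dr$ already present in the volume form, so you never need the surface element or the gradient length.

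The trade-off is as follows. The paper's version is more elementary: it only checks that $\partial_r\Phi$, $\partial_\theta\Phi$ and the sphere directions are orthogonal. Yours relies on the standard coarea (Gelfand--Leray) fact, but it is shorter and adapts directly to other defining functions depending only on $|z|$ and $|w|$. It also matches the invariance structure of $d\mu_F$. Under $\rho\mapsto h\rho$ with $h>0$, the Leray form on $\{\rho=0\}$ scales by $h^{-1}$, while $M(\rho)$ scales by $h^{n+1}$. That is exactly why the product in \eqref{eq:feffmeasure} does not depend on the choice of defining function.
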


\begin{proof}
Write coordinates as $(z_1,\ldots,z_k,w)$.  The complex Hessian of $\rho$ is diagonal:
\[
(\rho_{j\bar \ell})=
\diag(1,\ldots,1,m^2|w|^{2m-2}).
\]
Also
\[
\rho_{z_j}=\bar z_j,
\qquad
\rho_w=m|w|^{2m-2}\bar w.
\]
At points with $w\ne 0$, the Hessian is invertible.  If $H=(\rho_{j\bar\ell})$, then the determinant in \eqref{eq:MongeAmpere} equals
\[
M(\rho)=\det(H)\,\rho' H^{-1}\ol{\rho'}^{\,t},
\]
where $\rho'=(\rho_{z_1},\ldots,\rho_{z_k},\rho_w)$.  Thus
\[
\det(H)=m^2|w|^{2m-2},
\]
and
\[
\rho'H^{-1}\ol{\rho'}^{\,t}
=|z|^2+\frac{m^2|w|^{4m-2}}{m^2|w|^{2m-2}}
=|z|^2+|w|^{2m}=1
\]
on the boundary.  Hence \eqref{eq:Mvalue} holds when $w\ne0$.  Both sides are continuous on the boundary, so the identity extends to $w=0$.

It remains to compute $d\sigma/|\nabla\rho|$ in the coordinates \eqref{eq:param}.  Put
\[
R(r)=(1-r^{2m})^{1/2}.
\]
The tangent directions along $S^{2k-1}$ are scaled by $R$, the $\theta$-direction has length $r$, and the $r$-direction has length $(1+R'(r)^2)^{1/2}$, where
\[
R'(r)=-\frac{m r^{2m-1}}{R(r)}.
\]
Therefore
\begin{align*}
\Phi^*d\sigma
&=R^{2k-1}r(1+R'^2)^{1/2}
  \,d\sigma_{S^{2k-1}}\,d\theta\,dr \\
&=R^{2k-2}r\bigl(R^2+m^2r^{4m-2}\bigr)^{1/2}
  \,d\sigma_{S^{2k-1}}\,d\theta\,dr .
\end{align*}
The real gradient satisfies
\[
|\nabla\rho|=2\bigl(|z|^2+m^2|w|^{4m-2}\bigr)^{1/2}
=2\bigl(R^2+m^2r^{4m-2}\bigr)^{1/2}
\]
on the parametrized boundary.  Hence
\[
\Phi^*\left(\frac{d\sigma}{|\nabla\rho|}\right)
=\frac12 R^{2k-2}r\,d\sigma_{S^{2k-1}}\,d\theta\,dr.
\]
Multiplying by $M(\rho)^{1/(n+1)}=(m^2r^{2m-2})^{1/(n+1)}$ gives \eqref{eq:pullbackmeasure}.
\end{proof}

\section{Monomial norms and the higher-dimensional kernel}

Let $\alpha=(\alpha_1,\ldots,\alpha_k)\in\mathbb N^k$, $|\alpha|=\alpha_1+\cdots+\alpha_k$, $\alpha!=\alpha_1!\cdots\alpha_k!$, and $z^\alpha=z_1^{\alpha_1}\cdots z_k^{\alpha_k}$.  Define
\begin{equation}\label{eq:lambda}
\lambda_\beta=\frac{2\beta+2+a}{2m}
=\frac{\beta+1+\frac{m-1}{n+1}}{m}
=\frac{\beta}{m}+\mu_{n,m},
\qquad
\mu_{n,m}:=\frac{n+m}{m(n+1)}.
\end{equation}

\begin{lemma}[Boundary monomial norms]\label{lem:monomialnorm}
With respect to $d\mu_F$,
\begin{equation}\label{eq:normformula}
\norm{z^\alpha w^\beta}_{L^2(\partial E_{n,m},d\mu_F)}^2
=A_{n,m}\,\alpha!\,\frac{\Gamma(\lambda_\beta)}{\Gamma(\lambda_\beta+k+|\alpha|)},
\end{equation}
where
\begin{equation}\label{eq:Aconstant}
A_{n,m}=\frac{m^{2/(n+1)}\pi^n}{m}.
\end{equation}
\end{lemma}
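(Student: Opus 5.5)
The plan is to compute the norm directly in the coordinates \eqref{eq:param}, using the pulled-back measure \eqref{eq:pullbackmeasure} from Proposition~\ref{prop:density}. On the parametrized boundary, $|z^\alpha w^\beta|^2=(1-r^{2m})^{|\alpha|}|\xi^\alpha|^2 r^{2\beta}$. Since $\{z=0\}$ has measure zero, Fubini gives
\[
\norm{z^\alpha w^\beta}^2=\frac{m^{2/(n+1)}}{2}\cdot 2\pi\cdot\int_{S^{2k-1}}|\xi^\alpha|^2\,d\sigma(\xi)\cdot\int_0^1 r^{2\beta+a+1}(1-r^{2m})^{|\alpha|+k-1}\,dr .
\]
The $\theta$-integral contributes $2\pi$ because the integrand does not depend on $\theta$.

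For the sphere integral I will use the classical formula
\[
\int_{S^{2k-1}}|\xi^\alpha|^2\,d\sigma=\frac{2\pi^k\,\alpha!}{(k-1+|\alpha|)!}.
\]
If a derivation is wanted, it follows by integrating $|z^\alpha|^2e^{-|z|^2}$ over $\C^k$ in two ways: in product form it gives $\pi^k\alpha!$, and in polar form it gives $\tfrac12\Gamma(k+|\alpha|)$ times the sphere integral.

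For the radial integral, substitute $t=r^{2m}$, so that $dr=\frac{1}{2m}t^{1/(2m)-1}dt$. Then
\[
\int_0^1 r^{2\beta+a+1}(1-r^{2m})^{|\alpha|+k-1}dr=\frac{1}{2m}\int_0^1 t^{\frac{2\beta+a+2}{2m}-1}(1-t)^{|\alpha|+k-1}dt=\frac{1}{2m}B\bigl(\lambda_\beta,\,|\alpha|+k\bigr).
\]
This is where the definition \eqref{eq:lambda} of $\lambda_\beta$ enters. Writing the Beta function as $\Gamma(\lambda_\beta)\Gamma(|\alpha|+k)/\Gamma(\lambda_\beta+k+|\alpha|)$, the factor $\Gamma(|\alpha|+k)=(k-1+|\alpha|)!$ cancels the denominator of the sphere integral.

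Collecting constants gives
\[
\frac{m^{2/(n+1)}}{2}\cdot 2\pi\cdot 2\pi^k\cdot\frac{1}{2m}=\frac{m^{2/(n+1)}\pi^{k+1}}{m}=A_{n,m},
\]
using $k+1=n$. This yields \eqref{eq:normformula}.

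The only real obstacle is bookkeeping. One point to check is that the exponent of $r$ in \eqref{eq:pullbackmeasure} correctly becomes $2m\lambda_\beta-1$, so that after the substitution $t=r^{2m}$ the power of $t$ is exactly $\lambda_\beta-1$. The other is keeping the normalization of $d\sigma_{S^{2k-1}}$ consistent: it must be the unnormalized surface measure, as used in the proof of Proposition~\ref{prop:density}, for the constant to come out as $A_{n,m}$. The case $k=1$ needs no separate treatment, since the same formulas apply with $S^1$.
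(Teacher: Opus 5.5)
Your proposal is correct and follows the paper's proof essentially step for step: you use Fubini in the coordinates \eqref{eq:param} with the pulled-back density from Proposition~\ref{prop:density}, the sphere moment formula $\int_{S^{2k-1}}|\xi^\alpha|^2\,d\sigma=2\pi^k\alpha!/\Gamma(k+|\alpha|)$, the substitution $t=r^{2m}$ giving $\frac1{2m}B(\lambda_\beta,k+|\alpha|)$, and the same collection of constants into $A_{n,m}$. Your Gaussian derivation of the sphere integral and your remark about the null set $\{z=0\}$ are small additions that the paper takes for granted.
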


\begin{proof}
Use Proposition \ref{prop:density}.  Since
\[
\int_{S^{2k-1}}|\xi^\alpha|^2\,d\sigma_{S^{2k-1}}(\xi)
=\frac{2\pi^k\alpha!}{\Gamma(k+|\alpha|)},
\]
and $\int_0^{2\pi}d\theta=2\pi$, we obtain
\begin{align*}
\norm{z^\alpha w^\beta}^2
&=\frac{m^{2/(n+1)}}2
\left(\frac{2\pi^k\alpha!}{\Gamma(k+|\alpha|)}\right)(2\pi)
\int_0^1 r^{2\beta+a+1}(1-r^{2m})^{k-1+|\alpha|}\,dr.
\end{align*}
Set $t=r^{2m}$.  Then
\[
r^{2\beta+a+1}\,dr=\frac1{2m}t^{\lambda_\beta-1}\,dt,
\]
so the radial integral is
\[
\frac1{2m}B(\lambda_\beta,k+|\alpha|)
=\frac1{2m}\frac{\Gamma(\lambda_\beta)\Gamma(k+|\alpha|)}{\Gamma(\lambda_\beta+k+|\alpha|)}.
\]
Combining the constants gives
\[
\frac{m^{2/(n+1)}}2\cdot 2\pi\cdot 2\pi^k\cdot \frac1{2m}
=\frac{m^{2/(n+1)}\pi^n}{m}=A_{n,m},
\]
and \eqref{eq:normformula} follows.
\end{proof}

Let $H_F^2(E_{n,m})$ be the closure in $L^2(\partial E_{n,m},d\mu_F)$ of the holomorphic functions defined in a neighbourhood of $\overline{E_{n,m}}$.  The closure $\overline{E_{n,m}}$ is convex, hence polynomially convex.  By the Oka--Weil theorem, every function holomorphic in a neighbourhood of $\overline{E_{n,m}}$ is uniformly approximable on $\overline{E_{n,m}}$ by polynomials, and hence its boundary values are approximable in $L^2(\partial E_{n,m},d\mu_F)$ by polynomials.  The torus symmetry of $d\mu_F$ makes distinct boundary monomials orthogonal.  Since polynomials are finite sums of monomials, the closed span of the boundary monomials is exactly $H_F^2(E_{n,m})$.

\begin{theorem}[Higher-dimensional Fefferman--Szegő kernel]\label{thm:kernelseries}
The space $H_F^2(E_{n,m})$ is a reproducing-kernel Hilbert space.  Its kernel is
\begin{equation}\label{eq:kernelseries}
S_{n,m}((z,w),(\zeta,\eta))
=C_{n,m}\sum_{\beta=0}^{\infty}
\frac{\Gamma(k+\lambda_\beta)}{\Gamma(\lambda_\beta)}
\frac{(w\bar\eta)^\beta}{(1-\ip{z}{\zeta})^{k+\lambda_\beta}},
\end{equation}
where
\begin{equation}\label{eq:Cconstant}
C_{n,m}=A_{n,m}^{-1}=\frac{m^{1-2/(n+1)}}{\pi^n}.
\end{equation}
The series converges absolutely and locally uniformly on $E_{n,m}\times E_{n,m}$.
\end{theorem}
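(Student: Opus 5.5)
The plan is to use the orthogonal monomial basis. By the discussion before the statement, the boundary monomials $z^\alpha w^\beta$ are pairwise orthogonal in $L^2(\partial E_{n,m},d\mu_F)$ and their closed span is $H_F^2(E_{n,m})$. Lemma \ref{lem:monomialnorm} gives their norms, so $\{z^\alpha w^\beta/\norm{z^\alpha w^\beta}\}$ is an orthonormal basis.

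The first step is to make $H_F^2$ a reproducing-kernel Hilbert space. Elements of $H_F^2$ are $L^2$ limits $f=\sum c_{\alpha\beta}z^\alpha w^\beta$ with $\sum|c_{\alpha\beta}|^2\norm{z^\alpha w^\beta}^2<\infty$. At an interior point $(z,w)$, Cauchy--Schwarz gives
\[
\Bigl|\sum c_{\alpha\beta}z^\alpha w^\beta\Bigr|^2\le \norm{f}^2\sum_{\alpha,\beta}\frac{|z^\alpha|^2|w|^{2\beta}}{\norm{z^\alpha w^\beta}^2}.
\]
So it suffices to show the right-hand sum is finite, locally uniformly in $E_{n,m}$. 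Once that holds, point evaluation extends continuously from polynomials, the extension defines a holomorphic function inside, and the map from $H_F^2$ to these functions is injective. Injectivity follows from orthogonality, since the coefficients are recovered as inner products. The kernel is then forced to be
\[
S=\sum_{\alpha,\beta} \frac{z^\alpha\bar\zeta^\alpha (w\bar\eta)^\beta}{\norm{z^\alpha w^\beta}^2}.
\]

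The second step is summing over $\alpha$ for fixed $\beta$. Using \eqref{eq:normformula}, the $\alpha$-sum is
\[
A_{n,m}^{-1}\frac{1}{\Gamma(\lambda_\beta)}\sum_\alpha \frac{\Gamma(\lambda_\beta+k+|\alpha|)}{\alpha!}z^\alpha\bar\zeta^\alpha.
\]
Grouping by $j=|\alpha|$, the multinomial theorem gives $\sum_{|\alpha|=j}z^\alpha\bar\zeta^\alpha/\alpha!=\ip{z}{\zeta}^j/j!$. Then the negative binomial series
\[
\sum_j \frac{\Gamma(s+j)}{j!}u^j=\Gamma(s)(1-u)^{-s},\qquad |u|<1,
\]
with $s=k+\lambda_\beta$ and $u=\ip{z}{\zeta}$, yields the $\beta$-th term of \eqref{eq:kernelseries}. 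This uses $|\ip{z}{\zeta}|\le|z||\zeta|<1$. Everything is nonnegative on the diagonal, so the same computation also evaluates the Cauchy--Schwarz sum above.

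The main obstacle is convergence of the resulting $\beta$-series, absolutely and locally uniformly on $E_{n,m}\times E_{n,m}$. On a compact subset of $E_{n,m}\times E_{n,m}$ there is $\delta>0$ with $|z|^2+|w|^{2m}\le 1-\delta$, and the same for $(\zeta,\eta)$. By Stirling, $\Gamma(k+\lambda_\beta)/\Gamma(\lambda_\beta)\sim(\beta/m)^k$ grows polynomially. We also have $|1-\ip{z}{\zeta}|\ge 1-|z||\zeta|$. So the terms are bounded by a polynomial in $\beta$ times $q^\beta$, where
\[
q=\frac{|w||\eta|}{(1-|z||\zeta|)^{1/m}}.
\]
It remains to check $q\le q_0<1$ uniformly on the compact set. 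Cauchy--Schwarz-type inequalities give $1-|z||\zeta|\ge \tfrac12\bigl((1-|z|^2)+(1-|\zeta|^2)\bigr)\ge\sqrt{(1-|z|^2)(1-|\zeta|^2)}$. Combined with $|w|^{2m}<1-|z|^2$, $|\eta|^{2m}<1-|\zeta|^2$ and compactness, this gives
\[
(|w||\eta|)^{m}<\sqrt{(1-|z|^2)(1-|\zeta|^2)}\le 1-|z||\zeta|,
\]
so $q<1$, uniformly on compacta by continuity. The Weierstrass M-test then gives absolute and locally uniform convergence, which also justifies the interchange of the $\alpha$ and $\beta$ sums. The constant is $C_{n,m}=A_{n,m}^{-1}$, which matches \eqref{eq:Cconstant}.
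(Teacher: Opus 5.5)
Your proposal is correct and follows the paper's proof essentially step for step. Both arguments normalize the monomials to an orthonormal basis, sum over $\alpha$ via the multinomial/negative-binomial identity, and play the polynomial growth of $(\lambda_\beta)_k$ against a geometric bound on $|w\bar\eta|/|1-\ip{z}{\zeta}|^{1/m}$, with Cauchy--Schwarz giving bounded point evaluation. The differences are cosmetic: you get the uniform bound $q<1$ from the AM--GM chain $1-|z||\zeta|\ge\sqrt{(1-|z|^2)(1-|\zeta|^2)}$ rather than the paper's bound $|\ip{z}{\zeta}|+|w\bar\eta|^m\le q$, and you spell out the injectivity of $H_F^2$ into interior holomorphic functions and the interchange of sums, which the paper leaves implicit.
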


\begin{proof}
By Lemma \ref{lem:monomialnorm}, an orthonormal basis is obtained by normalizing the monomials $z^\alpha w^\beta$.  Thus
\begin{align*}
S_{n,m}((z,w),(\zeta,\eta))
&=\sum_{\alpha\in\mathbb N^k}\sum_{\beta\ge0}
\frac{z^\alpha w^\beta\bar\zeta^\alpha\bar\eta^\beta}
{A_{n,m}\alpha!\Gamma(\lambda_\beta)/\Gamma(\lambda_\beta+k+|\alpha|)}  \\
&=C_{n,m}\sum_{\beta\ge0}(w\bar\eta)^\beta
\sum_{\alpha\in\mathbb N^k}
\frac{\Gamma(\lambda_\beta+k+|\alpha|)}{\Gamma(\lambda_\beta)\alpha!}
(z\bar\zeta)^\alpha.
\end{align*}
For fixed $\beta$,
\[
\frac{\Gamma(\lambda_\beta+k+|\alpha|)}{\Gamma(\lambda_\beta)}
=\frac{\Gamma(\lambda_\beta+k)}{\Gamma(\lambda_\beta)}(\lambda_\beta+k)_{|\alpha|}.
\]
The multinomial identity
\[
\sum_{\alpha\in\mathbb N^k}\frac{(\lambda_\beta+k)_{|\alpha|}}{\alpha!}(z\bar\zeta)^\alpha
=(1-\ip{z}{\zeta})^{-k-\lambda_\beta}
\]
then gives \eqref{eq:kernelseries}.

For compact $K\subset E_{n,m}$ there exists $0<q<1$ such that
\[
|\ip{z}{\zeta}|+|w\bar\eta|^m\le q
\]
for all $(z,w),(\zeta,\eta)\in K$.  Hence $|1-\ip{z}{\zeta}|$ is bounded below and
\[
\left|\frac{w\bar\eta}{(1-\ip{z}{\zeta})^{1/m}}\right|\le q'<1
\]
on $K\times K$ for some $q'<1$, where the principal branch is used.  Since
\[
\frac{\Gamma(k+\lambda_\beta)}{\Gamma(\lambda_\beta)}=(\lambda_\beta)_k=O((1+\beta)^k),
\]
the outer series is dominated on compact sets by a polynomial times a geometric series.  Therefore the series converges absolutely and locally uniformly.  In particular, for each $p\in E_{n,m}$ the diagonal sum $S_{n,m}(p,p)$ is finite.  If $f=\sum_j c_j e_j$ in the monomial orthonormal basis, then Cauchy--Schwarz gives
\[
 |f(p)|^2\le \left(\sum_j |c_j|^2\right)\left(\sum_j |e_j(p)|^2\right)
 =\norm{f}_{H_F^2}^2 S_{n,m}(p,p),
\]
so point evaluation is bounded.  The locally uniform sum of the orthonormal-basis expansion is therefore the reproducing kernel of the Hilbert closure.
\end{proof}

\begin{remark}
For $n=2$ one has $k=1$ and $\mu_{2,m}=(m+2)/(3m)$.  Formula \eqref{eq:kernelseries} becomes, after summing a linear generating function,
\[
S_{2,m}=C_{2,m}\,(1-z\bar\zeta)^{-1-\mu_{2,m}}
\frac{\mu_{2,m}+(m^{-1}-\mu_{2,m})X}{(1-X)^2},
\qquad
X=\frac{w\bar\eta}{(1-z\bar\zeta)^{1/m}}.
\]
Up to the harmless constant fixed by the normalization of $d\mu_F$, this is the closed formula in the two-dimensional egg calculation of Bhatnagar and Fan.
\end{remark}

\section{Closed form in the auxiliary variable and boundary singularities}

The preceding series is already explicit.  Since $k=n-1$ is an integer, it also has a closed form in one auxiliary finite-type variable.  When $m>1$, this should not be read as a rational function of the original holomorphic variables: the auxiliary variable contains a chosen branch of a fractional power.

\begin{lemma}[A finite generating function]\label{lem:genfunction}
For $k\in\mathbb N$ and $\mu>0$ there is a polynomial $P_{k,\mu}$ of degree at most $k$ such that
\begin{equation}\label{eq:genfunction}
\sum_{\beta=0}^{\infty}(\mu+\beta/m)_kX^\beta
=\frac{P_{k,\mu}(X)}{(1-X)^{k+1}},
\qquad |X|<1.
\end{equation}
Equivalently,
\begin{equation}\label{eq:Pexplicit}
 P_{k,\mu}(X)=(1-X)^{k+1}\sum_{\beta\ge0}(\mu+\beta/m)_kX^\beta;
\end{equation}
the proof below shows that this expression is a polynomial of degree at most $k$.
Moreover $P_{k,\mu}(1)=k!/m^k$.
\end{lemma}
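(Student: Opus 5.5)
The plan is to expand the Pochhammer symbol in powers of $\beta$ and sum term by term. Since $(\mu+\beta/m)_k=\prod_{j=0}^{k-1}(\mu+j+\beta/m)$ is a polynomial in $\beta$ of degree exactly $k$ with leading coefficient $m^{-k}$, I would write it in the binomial basis:
\[
(\mu+\beta/m)_k=\sum_{j=0}^{k}c_j\binom{\beta+j}{j},
\]
which is possible because $\{\binom{\beta+j}{j}\}_{0\le j\le k}$ is a basis of polynomials in $\beta$ of degree at most $k$. Comparing leading coefficients gives $c_k\cdot\frac1{k!}=m^{-k}$, so $c_k=k!/m^k$. Then the elementary identity $\sum_{\beta\ge0}\binom{\beta+j}{j}X^\beta=(1-X)^{-j-1}$ for $|X|<1$ (absolutely convergent, so the sums may be interchanged) yields
\[
\sum_{\beta\ge0}(\mu+\beta/m)_kX^\beta=\sum_{j=0}^k\frac{c_j}{(1-X)^{j+1}}
=\frac{\sum_{j=0}^k c_j(1-X)^{k-j}}{(1-X)^{k+1}}.
\]
This gives \eqref{eq:genfunction} with $P_{k,\mu}(X)=\sum_{j=0}^k c_j(1-X)^{k-j}$, which is visibly a polynomial of degree at most $k$. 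The equivalent form \eqref{eq:Pexplicit} follows by multiplying both sides by $(1-X)^{k+1}$.

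Evaluating at $X=1$ kills every term except $j=k$, so $P_{k,\mu}(1)=c_k=k!/m^k$. The case $k=0$ is trivial, with $P=1$.

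No step is a real obstacle. The only points needing care are justifying the change of basis and confirming the leading coefficient of $(\mu+\beta/m)_k$. As a sanity check, for $k=1$ the formula gives $P=\mu(1-X)+X/m$, matching the $n=2$ remark after Theorem \ref{thm:kernelseries}.
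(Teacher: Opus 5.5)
Your proof is correct and complete. It differs from the paper's proof in the basis you expand in.

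The paper writes $(\mu+\beta/m)_k=\sum_j c_j\beta^j$ in the monomial basis and uses $\sum_{\beta}\beta^jX^\beta=(X\partial_X)^j(1-X)^{-1}$. This tacitly relies on the Eulerian-polynomial fact that $(1-X)^{j+1}(X\partial_X)^j(1-X)^{-1}$ is a polynomial of degree at most $j$. The paper then gets $P_{k,\mu}(1)=k!/m^k$ by a separate asymptotic comparison as $X\to1^-$, using that only the top term $m^{-k}\beta^k$ contributes to the leading singularity $(1-X)^{-k-1}$.

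Your basis $\binom{\beta+j}{j}$ is matched to the denominators $(1-X)^{-j-1}$. It therefore gives the explicit numerator $P_{k,\mu}(X)=\sum_{j=0}^k c_j(1-X)^{k-j}$, from which the degree bound and $P_{k,\mu}(1)=c_k=k!/m^k$ are read off with no limiting argument.

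Your form also pays off later. The polynomials $(1-X)^{k-j}$, $0\le j\le k$, form a basis, so $P_{k,\mu}$ is constant exactly when $c_0=\cdots=c_{k-1}=0$. That is, $(\mu+\beta/m)_k$ must be a constant multiple of $\binom{\beta+k}{k}=(\beta+1)_k/k!$, which is precisely the criterion used in Lemma~\ref{lem:Pconstant}. The paper's monomial route is more familiar, but it gives less explicit information about $P_{k,\mu}$.
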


\begin{proof}
The function $(\mu+\beta/m)_k$ is a polynomial in $\beta$ of degree $k$ with leading coefficient $m^{-k}$.  Every sequence given by a degree-$k$ polynomial has an ordinary generating function whose denominator divides $(1-X)^{k+1}$.  To verify this directly, write
\[
(\mu+\beta/m)_k=\sum_{j=0}^k c_j\beta^j.
\]
The identity
\[
\sum_{\beta\ge0}\beta^jX^\beta=(X\partial_X)^j\frac1{1-X}
\]
shows that the sum is $P_{k,\mu}(X)/(1-X)^{k+1}$ for a polynomial $P_{k,\mu}$ of degree at most $k$.

Finally,
\[
\sum_{\beta\ge0}(\mu+\beta/m)_kX^\beta
\sim \frac{k!m^{-k}}{(1-X)^{k+1}}
\qquad (X\to1^-),
\]
because the leading term of $(\mu+\beta/m)_k$ is $m^{-k}\beta^k$ and
$\sum \beta^kX^\beta\sim k!(1-X)^{-k-1}$.  Multiplying by $(1-X)^{k+1}$ and passing to the limit gives $P_{k,\mu}(1)=k!/m^k$.
\end{proof}

\begin{theorem}[Closed form in the auxiliary variable]\label{thm:rational}
Let
\[
X=X((z,w),(\zeta,\eta))
=\frac{w\bar\eta}{(1-\ip{z}{\zeta})^{1/m}},
\]
where the principal branch of $(1-\ip{z}{\zeta})^{1/m}$ is used.  Then the kernel is a finite quotient in the auxiliary variable $X$, multiplied by the explicit fractional-power factor in $1-\ip{z}{\zeta}$:
\begin{equation}\label{eq:rationalkernel}
S_{n,m}((z,w),(\zeta,\eta))
=C_{n,m}\,(1-\ip{z}{\zeta})^{-k-\mu_{n,m}}
\frac{P_{k,\mu_{n,m}}(X)}{(1-X)^n}.
\end{equation}
\end{theorem}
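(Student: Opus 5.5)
The plan is to start from the series \eqref{eq:kernelseries} of Theorem~\ref{thm:kernelseries} and factor out the $\beta$-independent part of the denominator. For each $\beta$ I would split the power as
\[
(1-\ip{z}{\zeta})^{-(k+\lambda_\beta)}=(1-\ip{z}{\zeta})^{-k-\mu_{n,m}}\,\bigl((1-\ip{z}{\zeta})^{-1/m}\bigr)^{\beta},
\]
using $\lambda_\beta=\mu_{n,m}+\beta/m$ from \eqref{eq:lambda}. This identity holds with principal branches. On $E_{n,m}\times E_{n,m}$ we have $|\ip{z}{\zeta}|<1$, so $1-\ip{z}{\zeta}$ lies in the open right half-plane. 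There $\log(1-\ip{z}{\zeta})$ is the principal logarithm, every real power is defined as $\exp(s\log(1-\ip{z}{\zeta}))$, and the exponents add, so no branch ambiguity arises. Combining $(w\bar\eta)^\beta$ with $(1-\ip{z}{\zeta})^{-\beta/m}$ then gives $X^\beta$ with $X$ as defined in the statement.

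Next, rewrite the coefficient as $\Gamma(k+\lambda_\beta)/\Gamma(\lambda_\beta)=(\lambda_\beta)_k=(\mu_{n,m}+\beta/m)_k$. The kernel then becomes
\[
C_{n,m}(1-\ip{z}{\zeta})^{-k-\mu_{n,m}}\sum_{\beta\ge0}(\mu_{n,m}+\beta/m)_kX^\beta .
\]
Before applying Lemma~\ref{lem:genfunction} with $\mu=\mu_{n,m}>0$, I need $|X|<1$. The proof of Theorem~\ref{thm:kernelseries} already gives $|X|\le q'<1$ on compact subsets, which covers every point of $E_{n,m}\times E_{n,m}$. For a self-contained argument, note that $|\ip{z}{\zeta}|\le|z||\zeta|$ and $|1-\ip{z}{\zeta}|\ge 1-|z||\zeta|$, so it suffices to show
\[
|w|^m|\eta|^m<1-|z||\zeta|.
\]
This follows from Cauchy--Schwarz applied to the vectors $(|z|,|w|^m)$ and $(|\zeta|,|\eta|^m)$, each of norm less than $1$.

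With $|X|<1$ in hand, Lemma~\ref{lem:genfunction} evaluates the sum as $P_{k,\mu_{n,m}}(X)/(1-X)^{k+1}$. Since $k+1=n$, this is exactly \eqref{eq:rationalkernel}. The only real obstacle is the branch and convergence bookkeeping in the first step, namely checking that the exponent splitting is legitimate and that $|X|<1$. Everything else is direct substitution, and the termwise rearrangement is justified by the absolute convergence established in Theorem~\ref{thm:kernelseries}.
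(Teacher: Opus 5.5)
Your proposal is correct and follows the same route as the paper: rewrite $\Gamma(k+\lambda_\beta)/\Gamma(\lambda_\beta)$ as $(\mu_{n,m}+\beta/m)_k$, pull out the $\beta$-independent factor $(1-\ip{z}{\zeta})^{-k-\mu_{n,m}}$ to produce $X^\beta$, and apply Lemma~\ref{lem:genfunction}. Your explicit principal-branch check and your Cauchy--Schwarz proof of $|X|<1$ make precise what the paper leaves implicit; the paper instead relies on the bound $|X|\le q'<1$ already obtained in the proof of Theorem~\ref{thm:kernelseries}.
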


\begin{proof}
By \eqref{eq:lambda},
\[
\frac{\Gamma(k+\lambda_\beta)}{\Gamma(\lambda_\beta)}=(\lambda_\beta)_k=(\mu_{n,m}+\beta/m)_k.
\]
Substitute
\[
\frac{(w\bar\eta)^\beta}{(1-\ip{z}{\zeta})^{k+\lambda_\beta}}
=(1-\ip{z}{\zeta})^{-k-\mu_{n,m}}X^\beta
\]
into \eqref{eq:kernelseries}.  Lemma \ref{lem:genfunction} gives \eqref{eq:rationalkernel}.
\end{proof}

\begin{proposition}[Boundary blow-up set]\label{prop:blowup}
The kernel $S_{n,m}(p,q)$ can blow up on $\overline{E_{n,m}}\times\overline{E_{n,m}}$ only at boundary diagonal points $(p,p)$ with $p\in\partial E_{n,m}$.  Conversely, $S_{n,m}(p_j,p_j)\to\infty$ whenever $p_j\to p\in\partial E_{n,m}$.
\end{proposition}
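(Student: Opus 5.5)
We work from the series \eqref{eq:kernelseries} together with the closed form of Theorem \ref{thm:rational}. The plan has two halves: a domination bound for the first claim, and a lower bound on the diagonal for the converse.

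\emph{Step 1: boundedness away from the boundary diagonal.} Fix $(p,q)\in\overline{E_{n,m}}\times\overline{E_{n,m}}$ with $p=(z,w)$, $q=(\zeta,\eta)$, and assume either $p\ne q$ or $p\notin\partial E_{n,m}$. We show that $S_{n,m}$ stays bounded near $(p,q)$. Cauchy--Schwarz gives
\[
|\ip{z}{\zeta}|\le|z||\zeta|\le1, \qquad |w\bar\eta|^m\le\bigl((1-|z|^2)(1-|\zeta|^2)\bigr)^{1/2},
\]
so that
\[
|\ip{z}{\zeta}|+|w\bar\eta|^m\le |z||\zeta|+\sqrt{(1-|z|^2)(1-|\zeta|^2)}\le1.
\]
Equality throughout forces $|z|=|\zeta|$, $z$ and $\zeta$ positively parallel, $|w|=|\eta|$ with both points on the boundary, and also $|\ip z\zeta|=|z||\zeta|$. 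Next we handle the arguments of $w$ and $\eta$. Write $\ip z\zeta=s$ and $X^m=(w\bar\eta)^m/(1-s)$. The singular locus in the closed form is $\{1-s=0\}\cup\{X=1\}$. Since $|1-s|\ge1-|s|\ge|w\bar\eta|^m$, we get $|X|\le1$, with equality only if $s\ge0$ is real and $|w\bar\eta|^m=1-s$, i.e.\ the equality case above. When $X=1$ we additionally need $w\bar\eta=(1-s)^{1/m}>0$, which gives $w=\eta$ and hence $p=q\in\partial E_{n,m}$.

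The degenerate case $1-s=0$ forces $z=\zeta$ with $|z|=1$ and $w=\eta=0$, again a boundary diagonal point. Away from these configurations, continuity gives $|X|\le q'<1$ and $|1-s|\ge c>0$ on a neighbourhood of $(p,q)$ in $\overline E\times\overline E$. The domination argument from the proof of Theorem \ref{thm:kernelseries} (polynomial factor $(\lambda_\beta)_k$ times a geometric series) then bounds $S_{n,m}$ there. The main obstacle is this equality-case bookkeeping, in particular the branch issue when $|X|=1$ but $X\ne1$. There I would argue directly through the closed form \eqref{eq:rationalkernel}: the factor $P_{k,\mu}(X)/(1-X)^n$ is continuous on $\{|X|\le1,\ X\ne1\}$, and $(1-s)^{-k-\mu}$ is bounded when $1-s\ne0$.

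\emph{Step 2: divergence on the diagonal.} On the diagonal every term of \eqref{eq:kernelseries} is nonnegative, and $X=|w|^2/(1-|z|^2)^{1/m}\in[0,1)$. If $p_j\to p=(z_0,w_0)\in\partial E_{n,m}$ with $|z_0|<1$, then $X_j\to1$. Since $P_{k,\mu}(1)=k!/m^k>0$ by Lemma \ref{lem:genfunction}, the factor $(1-X_j)^{-n}\to\infty$ while the prefactor stays bounded below, so $S_{n,m}(p_j,p_j)\to\infty$. If $|z_0|=1$, the $\beta=0$ term alone is $C_{n,m}(\mu)_k(1-|z_j|^2)^{-k-\mu}\to\infty$. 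Because all terms are positive, the whole sum dominates that term, which settles this case as well.
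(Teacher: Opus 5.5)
Your proposal is correct and follows essentially the paper's route. Both arguments restrict the possible singularities of the closed form \eqref{eq:rationalkernel} to $1-\ip{z}{\zeta}=0$ or $X=1$, and force $p=q\in\partial E_{n,m}$ there by Cauchy--Schwarz equality together with the principal-branch condition $w\bar\eta>0$. On the diagonal, both obtain divergence from $P_{k,\mu}(1)=k!/m^k\ne0$ or from the $\beta=0$ term.

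The differences are minor. You apply Cauchy--Schwarz in two steps to get $|\ip{z}{\zeta}|+|w\bar\eta|^m\le1$, and hence $|X|\le1$ on the closure, whereas the paper applies it to the vectors $(z,w^m)$ and $(\zeta,\eta^m)$. You also spell out local boundedness near non-singular points, including the case $|X|=1$, $X\ne1$, which the paper leaves implicit.
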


\begin{proof}
Let $p=(z,w)$ and $q=(\zeta,\eta)$.  Formula \eqref{eq:rationalkernel} shows that possible singularities occur only when
\[
1-\ip{z}{\zeta}=0
\qquad\text{or}\qquad
X=1.
\]
If $1-\ip{z}{\zeta}=0$, then $\ip{z}{\zeta}=1$.  Since $|z|\le1$ and $|\zeta|\le1$, equality in Cauchy--Schwarz gives $|z|=|\zeta|=1$ and $z=\zeta$.  The defining inequality then forces $w=\eta=0$, so $p=q\in\partial E_{n,m}$.

Suppose now that $X=1$.  Then
\[
w\bar\eta=(1-\ip{z}{\zeta})^{1/m}.
\]
Raising to the $m$-th power gives
\begin{equation}\label{eq:singraise}
(w\bar\eta)^m=1-\ip{z}{\zeta}.
\end{equation}
Equivalently,
\[
\ip{(z,w^m)}{(\zeta,\eta^m)}=1.
\]
The two vectors $(z,w^m)$ and $(\zeta,\eta^m)$ lie in the closed unit ball of $\C^n$, because
\[
\|(z,w^m)\|^2=|z|^2+|w|^{2m}\le1,
\qquad
\|(\zeta,\eta^m)\|^2=|\zeta|^2+|\eta|^{2m}\le1.
\]
Therefore equality in Cauchy--Schwarz implies
\[
(z,w^m)=(\zeta,\eta^m),
\qquad
|z|^2+|w|^{2m}=1.
\]
In particular $p$ and $q$ are boundary points with $z=\zeta$ and $|w|=|\eta|$.  Since $X=1$ uses the principal positive branch and $1-\ip{z}{\zeta}=|w|^{2m}$ at this point, the equality $w\bar\eta=(1-\ip{z}{\zeta})^{1/m}$ becomes
\[
w\bar\eta=|w|^2.
\]
If $w=0$, then $\eta=0$; if $w\ne0$, this identity gives $\eta=w$.  Thus $p=q\in\partial E_{n,m}$.

Conversely, let $p_j=(z_j,w_j)\to p\in\partial E_{n,m}$.  On the diagonal,
\[
X(p_j,p_j)=\frac{|w_j|^2}{(1-|z_j|^2)^{1/m}}.
\]
If $w\ne0$, then $X(p_j,p_j)\to1$ and the factor $(1-X)^{-n}$ in \eqref{eq:rationalkernel} diverges.  Since $P_{k,\mu}(1)=k!/m^k\ne0$, the kernel diverges.  If $w=0$, then $|z|=1$ and the term with $\beta=0$ in \eqref{eq:kernelseries} is
\[
C_{n,m}\frac{\Gamma(k+\mu_{n,m})}{\Gamma(\mu_{n,m})}
(1-|z_j|^2)^{-k-\mu_{n,m}},
\]
which diverges.  This proves the proposition.
\end{proof}

\section{Weak boundary exponent}

The weak set is
\[
\Sigma_{n,m}=\{(z,w)\in\partial E_{n,m}:w=0\}
=\{(z,0):|z|=1\}.
\]
The next result extracts $m$ directly from the diagonal kernel near $\Sigma_{n,m}$.

\begin{theorem}[Weak-normal kernel exponent]\label{thm:weakexponent}
Let $e_1=(1,0,\ldots,0)\in\C^k$ and set $p_s=(se_1,0)$, $0<s<1$.  Then
\begin{equation}\label{eq:weakaxis}
S_{n,m}(p_s,p_s)=D_{n,m}\,(1-s^2)^{-\gamma_{n,m}},
\qquad
\gamma_{n,m}=k+\mu_{n,m}=n-1+\frac{n+m}{m(n+1)},
\end{equation}
where $D_{n,m}>0$.  For fixed $n$, the map $m\mapsto\gamma_{n,m}$ is strictly decreasing.  Hence the weak-normal blow-up exponent determines $m$.
\end{theorem}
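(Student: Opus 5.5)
The plan is to evaluate the series \eqref{eq:kernelseries} of Theorem \ref{thm:kernelseries} directly on the diagonal at $p_s$. At $p=q=p_s=(se_1,0)$ we have $w=\eta=0$, so every term with $\beta\ge1$ vanishes because of the factor $(w\bar\eta)^\beta=0^\beta$. Only the $\beta=0$ term survives. Since $\ip{z}{\zeta}=s^2$ and $\lambda_0=\mu_{n,m}$ by \eqref{eq:lambda}, this gives
\[
S_{n,m}(p_s,p_s)=C_{n,m}\frac{\Gamma(k+\mu_{n,m})}{\Gamma(\mu_{n,m})}(1-s^2)^{-k-\mu_{n,m}}.
\]
So \eqref{eq:weakaxis} holds with $D_{n,m}=C_{n,m}\,(\mu_{n,m})_k$. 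This constant is positive because $C_{n,m}>0$ by \eqref{eq:Cconstant} and $\mu_{n,m}>0$. As a consistency check, the closed form \eqref{eq:rationalkernel} gives the same result, since $X=0$ and $P_{k,\mu}(0)=(\mu)_k$ by \eqref{eq:Pexplicit}.

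Next comes the monotonicity. I write
\[
\mu_{n,m}=\frac{n+m}{m(n+1)}=\frac{1}{n+1}+\frac{n}{(n+1)m}.
\]
For fixed $n\ge2$ this is strictly decreasing in $m\ge1$, since $n/(n+1)>0$. Hence $\gamma_{n,m}=k+\mu_{n,m}$ is strictly decreasing in $m$, and the map $m\mapsto\gamma_{n,m}$ is injective. Explicitly, $m=n/\bigl((n+1)(\gamma_{n,m}-n+1)-1\bigr)$.

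Finally, I need to say why the exponent is well defined from the function $s\mapsto S_{n,m}(p_s,p_s)$. The exponent is recovered as
\[
\gamma_{n,m}=\lim_{s\to1^-}\frac{-\log S_{n,m}(p_s,p_s)}{\log(1-s^2)},
\]
and this limit exists because the constant $D_{n,m}$ contributes only $O(1/|\log(1-s^2)|)$. Since the blow-up exponent determines $\gamma_{n,m}$, injectivity shows that it determines $m$.

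There is no real obstacle. The only point needing care is justifying that the higher terms vanish exactly rather than asymptotically. This holds because $0^\beta=0$ for $\beta\ge1$, and the series converges absolutely at $p_s\in E_{n,m}$.
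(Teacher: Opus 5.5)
Your proposal is correct and follows the paper's proof essentially verbatim: you evaluate the series at $p_s$, keep only the $\beta=0$ term to obtain $D_{n,m}=C_{n,m}(\mu_{n,m})_k>0$, and read off strict monotonicity from $\gamma_{n,m}=n-1+\frac{1}{n+1}+\frac{n}{m(n+1)}$. Your extra remarks are correct but not needed: the closed-form check via $P_{k,\mu}(0)=(\mu)_k$, the explicit inversion for $m$, and the limit formula defining the exponent.
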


\begin{proof}
Put $w=\eta=0$ and $z=\zeta=se_1$ in \eqref{eq:kernelseries}.  All terms with $\beta\ge1$ vanish, leaving
\[
S_{n,m}(p_s,p_s)
=C_{n,m}\frac{\Gamma(k+\mu_{n,m})}{\Gamma(\mu_{n,m})}
(1-s^2)^{-k-\mu_{n,m}}.
\]
Thus \eqref{eq:weakaxis} holds with
\[
D_{n,m}=C_{n,m}\frac{\Gamma(k+\mu_{n,m})}{\Gamma(\mu_{n,m})}>0.
\]
Finally,
\[
\gamma_{n,m}=n-1+\frac1{n+1}+\frac{n}{m(n+1)},
\]
which is strictly decreasing as a function of $m\in\mathbb Z_+$.
\end{proof}

We now give an inverse statement using boundary type in the sense of D'Angelo \cite{DAngelo1982}.  The proof uses only the elementary invariance of order of contact under a holomorphic coordinate change, so the map is required to extend holomorphically across the boundary.

\begin{definition}
A biholomorphism $F:E_{n,m}\to E_{n',m'}$ is called boundary regular if there are open neighbourhoods $U\supset\overline{E_{n,m}}$ and $U'\supset\overline{E_{n',m'}}$ such that $F$ extends to a biholomorphism from $U$ onto $U'$.
\end{definition}

\begin{lemma}[D'Angelo type of the egg boundary]\label{lem:type}
At strongly pseudoconvex boundary points of $E_{n,m}$ the D'Angelo type is $2$.  At points of $\Sigma_{n,m}$ the D'Angelo type is $2m$.
\end{lemma}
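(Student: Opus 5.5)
The plan is to compute the D'Angelo type directly from its definition as the supremum, over nonconstant holomorphic curves $\phi:(\C,0)\to(\C^n,p)$, of $\ord_0(\rho\circ\phi)/\ord_0(\phi-p)$. This is done in two cases.

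\emph{Strongly pseudoconvex points.} I would first identify these points. By the Hessian computed in the proof of Proposition \ref{prop:density}, the Levi form of $\rho$ is $\diag(1,\ldots,1,m^2|w|^{2m-2})$. This matrix is positive definite whenever $w\neq0$, so every boundary point with $w\ne0$ is strongly pseudoconvex; when $m=1$ all boundary points are. At a strongly pseudoconvex point the type is $2$, by the standard fact. The upper bound comes from the Taylor expansion: the first nonvanishing term of $\rho\circ\phi$ is either the linear term $2\operatorname{Re}\ip{\partial\rho}{\phi'}t$ of order $\ord(\phi-p)$, or, if $\phi'$ is complex tangent, the Levi term $\ge c|t|^{2\ord}$. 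The lower bound comes from taking a complex tangent line. Hence the type at these points is $2$.

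\emph{Points of $\Sigma_{n,m}$, lower bound.} After a unitary rotation in $z$, which preserves $E_{n,m}$ and extends holomorphically, take $p=(e_1,0)$. Consider the curve $\phi(t)=(e_1,t)$. Then $\rho\circ\phi=|t|^{2m}$, which has order $2m$, while $\ord(\phi-p)=1$. So the type is at least $2m$.

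\emph{Points of $\Sigma_{n,m}$, upper bound.} This step is the main obstacle. Write $\phi(t)=(e_1+u(t),v(t))$ with $u(0)=0$, $v(0)=0$, and set $d=\min(\ord u,\ord v)$. Then
\[
\rho\circ\phi=2\operatorname{Re}u_1(t)+|u(t)|^2+|v(t)|^{2m}.
\]
The idea is to compare orders and rule out cancellation among the three terms, which I would split into cases.

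Suppose $\ord u_1=:a<\infty$. The term $2\operatorname{Re}u_1$ is harmonic, with a lowest-order homogeneous part $\operatorname{Re}(ct^a)$ that is not identically zero. The other two terms are nonnegative, and their lowest-order parts are nonnegative functions. A nonzero harmonic homogeneous polynomial changes sign, so it cannot cancel against a nonnegative term of the same order. Hence $\ord(\rho\circ\phi)\le a$.
\begin{itemize}[label={}]
\item If $a\le 2m\cdot\ord v$ and $a\le 2\ord u$, the order is at most $a$. Since $a\ge\ord u\ge d$ and $a\le 2m\,d$ in the relevant subcase, the ratio is at most $2m$.
\item Otherwise the lowest-order part is nonnegative and nonzero, and no harmonic cancellation occurs. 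The order is then $\min(2\ord u,2m\ord v)\le 2m\,d$.
\end{itemize}

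If $u_1\equiv0$, then $\rho\circ\phi=|u|^2+|v|^{2m}$ has order $\min(2\ord u,2m\ord v)\le 2m\,d$.

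In every case $\ord(\rho\circ\phi)/d\le 2m$. The bookkeeping I would check most carefully is the mixed case. There I would argue that, whichever term attains the minimum order, a positive term can never be cancelled by the pluriharmonic term, and a nonzero pluriharmonic leading part can only be reduced by positive terms of strictly lower order, which would themselves give order at most $2m\,d$. Combined with the lower bound, the type at points of $\Sigma_{n,m}$ is $2m$.
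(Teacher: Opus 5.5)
Your proposal is correct and follows essentially the paper's route: you reduce to $(e_1,0)$ by a unitary rotation, take the lower bound from the curve $(e_1,t)$, and get the upper bound by comparing $\ord u_1$ with $\min(2\ord u,2m\ord v)$ and ruling out cancellation of leading terms. The differences are minor. You exclude cancellation because a nonzero harmonic homogeneous term changes sign and so cannot cancel nonnegative terms, whereas the paper separates pure monomials $t^j,\bar t^j$ from mixed ones. You also keep $|u_1|^2$ inside the nonnegative part, so your inequality $\min(2\ord u,2m\ord v)\le 2m\min(\ord u,\ord v)$ justifies the final ratio bound more cleanly than the paper's closing step.
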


\begin{proof}
Strongly pseudoconvex points have type $2$ by the standard Levi-form characterization of strong pseudoconvexity.  It remains to compute the type at weak points.

By a unitary change in the $z$-variables it is enough to consider $p=(e_1,0)$.  Write local coordinates
\[
z_1=1+\zeta_1,
\qquad
z'=(z_2,\ldots,z_k),
\qquad
\nu=w.
\]
Then
\begin{equation}\label{eq:localrho}
\rho=2\operatorname{Re}\zeta_1+|\zeta_1|^2+|z'|^2+|\nu|^{2m}.
\end{equation}
The holomorphic curve
\[
\gamma(t)=(1,0,\ldots,0,t)
\]
has $\ord_0\gamma=1$ and
\[
\rho\circ\gamma(t)=|t|^{2m},
\]
so the type is at least $2m$.

We prove the reverse inequality.  Let
\[
\gamma(t)=(1+\zeta_1(t),z'(t),\nu(t))
\]
be a nonconstant holomorphic curve with $\gamma(0)=p$.  Let
\[
s=\ord_0\gamma=\min\{\ord_0\zeta_1,\ord_0 z'_2,\ldots,\ord_0z'_k,\ord_0\nu\},
\]
omitting identically zero components from the minimum.  If every tangential component $z'$ and $\nu$ is identically zero, then $\rho\circ\gamma=2\operatorname{Re}\zeta_1+|\zeta_1|^2$, and the desired bound is immediate.  Otherwise let
\[
q=\min_j\ord_0 z'_j,
\qquad
r=\ord_0\nu
\]
with the convention that the order of an identically zero component is $+\infty$, and put
\[
L=\min\{2q,2mr\}.
\]
Let $h$ be the order of the first nonzero pure harmonic term from $2\operatorname{Re}\zeta_1(t)$, if such a term exists.  If $h$ exists and $h<L$, then the leading term of $\rho\circ\gamma$ has order $h$.  Indeed, if $s=\ord_0\zeta_1$, then $s=h$; if $s=q$, then $h/s<2$; and if $s=r$, then $h/s<2m$.  In all cases $h/s\le 2m$.  Thus this case has the required bound.

It remains to consider the case in which no pure harmonic term occurs before order $L$.  The non-harmonic positive part
\[
|z'(t)|^2+|\nu(t)|^{2m}
\]
has order $L$.  Its first nonzero mixed term has bidegree $(L/2,L/2)$ when $L=2q$, or bidegree $(mr,mr)$ when $L=2mr$.  A term of the form $2\operatorname{Re}\zeta_1(t)$ contains only pure powers $t^j$ and $\bar t^j$ at each order and therefore cannot cancel this mixed contribution.  The term $|\zeta_1(t)|^2$ is nonnegative and cannot remove the first nonzero mixed contribution.  Hence
\[
\ord_0(\rho\circ\gamma)\le L.
\]
Since $s\le q$ and $s\le r$, we have
\[
\frac{L}{s}\le \max\{2,2m\}=2m.
\]
Therefore $\ord_0(\rho\circ\gamma)/\ord_0\gamma\le 2m$ for every nonconstant holomorphic curve.  The type is exactly $2m$.
\end{proof}

\begin{theorem}[Boundary-regular inverse rigidity]\label{thm:inverse}
If there exists a boundary-regular biholomorphism
\[
F:E_{n,m}\longrightarrow E_{n',m'},
\]
then $n=n'$ and $m=m'$.
\end{theorem}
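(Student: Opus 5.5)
The plan is to compare dimension first and then D'Angelo type, using Lemma \ref{lem:type} together with the invariance of type under local biholomorphisms that extend across the boundary.

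\emph{Step 1: dimension.} A biholomorphism $F$ is in particular a diffeomorphism between open subsets of $\C^n$ and $\C^{n'}$. Equality of real dimensions gives $2n=2n'$, so $n=n'$.

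\emph{Step 2: boundary correspondence.} Since $F$ extends to a biholomorphism $U\to U'$ carrying $E_{n,m}$ onto $E_{n,m'}$, the extension is a homeomorphism. It therefore maps closures to closures and boundaries to boundaries, so $F(\partial E_{n,m})=\partial E_{n,m'}$. Near any $p\in\partial E_{n,m}$, the function $\rho'\circ F$ is a local defining function for $\partial E_{n,m}$, where $\rho'$ denotes the defining function of $E_{n,m'}$. Indeed, $d(\rho'\circ F)=dF^*d\rho'\ne0$ because $dF$ is invertible. Since any two local defining functions differ by a smooth positive factor, $\rho'\circ F=h\rho$ with $h>0$ near $p$.

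\emph{Step 3: invariance of type.} Let $\gamma$ be a holomorphic curve with $\gamma(0)=p$. Then $F\circ\gamma$ is a holomorphic curve at $F(p)$, and $\ord_0(F\circ\gamma)=\ord_0\gamma$ because $F$ is a local biholomorphism: the linear part $dF_p$ is invertible, and $F^{-1}$ gives the reverse inequality. Moreover
\[
\ord_0(\rho'\circ F\circ\gamma)=\ord_0(h\circ\gamma\cdot\rho\circ\gamma)=\ord_0(\rho\circ\gamma),
\]
since $h\circ\gamma(0)>0$. The map $\gamma\mapsto F\circ\gamma$ is a bijection of curves, with inverse given by composition with $F^{-1}$. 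Taking suprema of the ratio $\ord_0(\rho\circ\gamma)/\ord_0\gamma$ over all curves, the D'Angelo type at $p$ equals the type at $F(p)$. Independence of the type from the choice of defining function follows by the same positive-factor argument.

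\emph{Step 4: conclusion.} Apply Lemma \ref{lem:type}. The maximal type attained on $\partial E_{n,m}$ is $2m$, attained on $\Sigma_{n,m}$, which is nonempty since $k\ge1$. Type is preserved pointwise and $F$ is a bijection of boundaries, so the sets of boundary types coincide: $\{2,2m\}=\{2,2m'\}$, with each set reducing to $\{2\}$ when $m=1$ or $m'=1$. Hence $2m=2m'$, so $m=m'$.

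\emph{Alternative route.} Theorem \ref{thm:weakexponent} also suggests a kernel-based argument via the transformation law of the Fefferman--Szegő kernel. However, the type argument needs only Lemma \ref{lem:type}, so I would use it.

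The main obstacle is Step 3. There one must check carefully that order of contact is unchanged when the defining function is multiplied by a positive smooth factor. The point is that $h\circ\gamma$ has a nonzero constant term, so multiplying by it does not change the lowest-order nonvanishing term of the Taylor expansion of $\rho\circ\gamma$ in $t,\bar t$.
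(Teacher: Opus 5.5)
Your proposal is correct and follows essentially the same route as the paper: you get $n=n'$ from the invertible differential, show D'Angelo type is preserved using $\rho_{n',m'}\circ F=h\rho_{n,m}$ and $\ord_0(F\circ\gamma)=\ord_0\gamma$, and then compare the boundary type sets $\{2\}$ and $\{2,2m\}$ from Lemma~\ref{lem:type}. Your additional details (the boundary correspondence via the homeomorphic extension, nonvanishing of $d(\rho'\circ F)$, and the bijection of curves through $F^{-1}$) spell out what the paper's proof leaves implicit.
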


\begin{proof}
A biholomorphism between domains has invertible complex differential, so the complex dimensions agree: $n=n'$.

Let $p\in\partial E_{n,m}$.  Since $F$ is holomorphic in a neighbourhood of $p$ and maps the boundary to the boundary, near $p$ one has
\[
\rho_{n',m'}\circ F=h\rho_{n,m}
\]
for a smooth positive function $h$.  For every holomorphic curve $\gamma$ through $p$,
\[
\ord_0(\rho_{n',m'}\circ F\circ\gamma)=\ord_0(\rho_{n,m}\circ\gamma),
\]
and, since $dF_p$ is invertible,
\[
\ord_0(F\circ\gamma)=\ord_0\gamma.
\]
Thus the D'Angelo type is preserved by $F$.

By Lemma \ref{lem:type}, the set of boundary types of $E_{n,m}$ is $\{2\}$ if $m=1$ and $\{2,2m\}$ if $m>1$.  The same description holds for $E_{n,m'}$.  Equality of the type spectra gives $m=m'$.
\end{proof}

\section{Scalar-curvature rigidity}

We now use the same one-variable closed form to prove the main metric rigidity theorem.  No separate coefficient-positivity assertion is needed: the proof is an algebraic obstruction coming from the zeros of the numerator polynomial.  Put
\[
 p=\frac1m,
 \qquad
 k=n-1,
 \qquad
 \mu=\mu_{n,m}=\frac{1+pn}{n+1}.
\]
Then
\[
 A:=k+\mu=n-1+\frac{1+pn}{n+1}=\frac{n(n+p)}{n+1}.
\]
On the diagonal write
\[
 u=1-|z|^2,
 \qquad
 x=\frac{|w|^2}{u^p}.
\]
The defining inequality for $E_{n,m}$ is exactly $0\le x<1$.  By Theorem \ref{thm:rational},
\begin{equation}\label{eq:FSdiagH}
 S_{n,m}((z,w),(z,w))=C_{n,m}u^{-A}H_{n,m}(x),
\end{equation}
where
\begin{equation}\label{eq:Hdef}
 H_{n,m}(x)=\sum_{\beta=0}^{\infty}(\mu+p\beta)_k x^\beta
 =\frac{P_{k,\mu}(x)}{(1-x)^n}.
\end{equation}
Here $(a)_j=a(a+1)\cdots(a+j-1)$.
Let
\[
 \omega_{FS}=\sqrt{-1}\,\partial\bar\partial\log S_{n,m}((z,w),(z,w))
\]
be the Fefferman--Szegő Kähler form.  The positivity is standard but worth making explicit.  The locally uniformly convergent kernel defines the holomorphic kernel map
\[
 p\longmapsto [S_{n,m}(\cdot,p)]\in \mathbb P(H_F^2(E_{n,m})^*),
\]
and $\omega_{FS}$ is the pullback of the Fubini--Study form by this map.  It is strictly positive because constants and coordinate functions belong to $H_F^2(E_{n,m})$: for every nonzero tangent vector $v$ at $p$, some affine holomorphic function $\ell$ satisfies $\ell(p)=0$ and $d\ell_p(v)\ne0$, so the kernel map separates $v$.

We also need the Bergman kernel only on the diagonal and only up to a positive multiplicative constant.  Let $K_{n,m}$ denote the ordinary Bergman kernel of $E_{n,m}$ with respect to Euclidean volume.

\begin{lemma}[Diagonal Bergman kernel form]\label{lem:BergmanDiag}
There is a positive constant $B_{n,m}$ such that
\begin{equation}\label{eq:Bergdiag}
 K_{n,m}((z,w),(z,w))=B_{n,m}u^{-n-p}G_{n,m}(x),
\end{equation}
where
\begin{equation}\label{eq:Gdef}
 G_{n,m}(x)=\sum_{\beta=0}^{\infty}(p(\beta+1))_n x^\beta
 =\frac{Q_{n,p}(x)}{(1-x)^{n+1}}
\end{equation}
for a polynomial $Q_{n,p}$ of degree at most $n$.
Moreover $Q_{n,p}$ is constant if and only if $p=1$.
\end{lemma}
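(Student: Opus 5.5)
The plan is to repeat, for Euclidean volume, the monomial computation of Lemma \ref{lem:monomialnorm} and Theorem \ref{thm:kernelseries}. After that, the question of when $Q_{n,p}$ is constant becomes a comparison of two polynomials in $\beta$.

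\textbf{Step 1: monomial norms.} I would integrate $|z^\alpha|^2|w|^{2\beta}$ over $E_{n,m}$ by slicing in $w$. For fixed $w$ with $|w|=r<1$, the $z$-slice is the ball of radius $R=(1-r^{2m})^{1/2}$. On that ball,
\[
\int_{|z|<R}|z^\alpha|^2\,dV(z)=\frac{\pi^k\alpha!}{\Gamma(k+|\alpha|+1)}R^{2k+2|\alpha|}.
\]
The $w$-integral then reduces to
\[
2\pi\int_0^1 r^{2\beta+1}(1-r^{2m})^{k+|\alpha|}\,dr .
\]
The substitution $t=r^{2m}$ turns this into $\frac{\pi}{m}B\bigl(p(\beta+1),\,k+|\alpha|+1\bigr)$. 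Combining the two factors, I expect
\[
\norm{z^\alpha w^\beta}^2_{L^2(E_{n,m})}=\frac{\pi^n}{m}\,\alpha!\,\frac{\Gamma(p(\beta+1))}{\Gamma(p(\beta+1)+n+|\alpha|)}.
\]
Monomials are orthogonal by the torus symmetry. Completeness of the monomials in the Bergman space follows from the fact that $E_{n,m}$ is a complete Reinhardt domain.

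\textbf{Step 2: kernel on the diagonal.} I would sum over $\alpha$ using the same multinomial identity as in the proof of Theorem \ref{thm:kernelseries}. This gives
\[
K_{n,m}=\frac{m}{\pi^n}\sum_{\beta\ge0}(p(\beta+1))_n\,|w|^{2\beta}u^{-n-p(\beta+1)} .
\]
Factoring out $u^{-n-p}$ leaves $x^\beta$ in each term, which gives \eqref{eq:Bergdiag} with $B_{n,m}=m/\pi^n$. The coefficient $(p(\beta+1))_n$ is a polynomial in $\beta$ of degree $n$. Hence the argument of Lemma \ref{lem:genfunction}, with $k$ replaced by $n$ and $\mu/m$-shifts replaced by $p(\beta+1)$, gives $G_{n,m}=Q_{n,p}/(1-x)^{n+1}$ with $\deg Q_{n,p}\le n$.

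\textbf{Step 3: constancy criterion.} This is the only step with real content. Suppose $Q_{n,p}\equiv c$. Then
\[
G_{n,m}(x)=c(1-x)^{-n-1}=c\sum_\beta\binom{\beta+n}{n}x^\beta .
\]
Power-series coefficients are unique, so we get a polynomial identity in $\beta$:
\[
(p(\beta+1))_n=\frac{c}{n!}(\beta+1)_n \quad\text{for all }\beta\in\mathbb N,
\]
and hence for all complex $\beta$. Now compare zero sets. The left side vanishes at $\beta=-1-j/p$ for $j=0,\dots,n-1$, and the right side vanishes at $\beta=-1-j$. Since $n\ge2$, the second root on each side must agree, which forces $-1-1/p=-2$, i.e.\ $p=1$. (Equivalently, compare the root nearest to $-1$ other than $-1$ itself.) Conversely, if $p=1$ then the coefficients are $(\beta+1)_n=n!\binom{\beta+n}{n}$, so $Q_{n,1}=n!$ is constant.

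I expect the main obstacle to be bookkeeping in Step 1, namely getting the exponent $n+p$ (not $n+1$) and the Pochhammer index $n$ correct. Step 3 is a short root-matching argument once the series is in this form.
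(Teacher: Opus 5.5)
Your proposal is correct and takes essentially the same approach as the paper. You compute the Euclidean monomial norms by slicing, sum over $\alpha$ with the multinomial identity to get $u^{-n-p}\sum_\beta (p(\beta+1))_n x^\beta$, and obtain $(1-x)^{-n-1}$ as the denominator. You then force $p=1$ by matching the roots $-1-j/p$ against $-1-j$. Along the way you also make the constants explicit ($C'_{n,m}=\pi^n/m$, $B_{n,m}=m/\pi^n$) and justify completeness of the monomials, which the paper leaves implicit.
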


\begin{proof}
The monomial calculation is the same as in Lemma \ref{lem:monomialnorm}, but with Euclidean volume measure on the interior.  For $\alpha\in\mathbb N^k$ and $\beta\ge0$, polar coordinates in $w$ and the ball integral in the $z$-variables give
\[
 \int_{E_{n,m}} |z^\alpha w^\beta|^2\,dV
 =C'_{n,m}\,\alpha!\,\frac{\Gamma(p(\beta+1))}{\Gamma(p(\beta+1)+n+|\alpha|)}
\]
with a positive constant $C'_{n,m}$ independent of $\alpha$ and $\beta$.  Summing the normalized monomials gives, up to the reciprocal constant,
\[
 K_{n,m}((z,w),(\zeta,\eta))
 =\sum_{\beta\ge0}
 \frac{\Gamma(n+p(\beta+1))}{\Gamma(p(\beta+1))}
 \frac{(w\bar\eta)^\beta}{(1-\langle z,\zeta\rangle)^{n+p(\beta+1)}}.
\]
Putting $(z,w)=(\zeta,\eta)$ and using $x=|w|^2u^{-p}$ gives \eqref{eq:Bergdiag} and \eqref{eq:Gdef}.

Since $(p(\beta+1))_n$ is a polynomial in $\beta$ of degree $n$, the ordinary generating function has denominator $(1-x)^{n+1}$ and numerator a polynomial of degree at most $n$.  If $Q_{n,p}$ is constant, then the coefficient sequence in \eqref{eq:Gdef} is a constant multiple of the coefficient sequence of $(1-x)^{-n-1}$.  Hence, as polynomials in $\beta$,
\[
 (p(\beta+1))_n=C(\beta+1)_n
\]
for some positive constant $C$.  The roots of the left side are
\[
 -1,\ -1-\frac1p,\ldots,\ -1-\frac{n-1}{p},
\]
whereas the roots of the right side are
\[
 -1,-2,\ldots,-n.
\]
Because $n\ge2$, equality of these two arithmetic progressions forces their common differences to agree, so $1/p=1$ and $p=1$.  Conversely, if $p=1$, then $(\beta+1)_n$ is exactly the coefficient polynomial of $(1-x)^{-n-1}$ up to the constant $n!$, so $Q_{n,1}$ is constant.
\end{proof}

\begin{lemma}[Determinant on the weak-normal slice]\label{lem:detSlice}
Let
\begin{align*}
 \Phi(z,w)&=\log S_{n,m}((z,w),(z,w))
 =\mathrm{constant}-A\log u+L(x),\\
 L(x)&=\log H_{n,m}(x).
\end{align*}
At points $(0,w)$, $x=|w|^2$, and the Hermitian determinant of the matrix
$\bigl(\Phi_{j\bar\ell}\bigr)_{1\le j,\ell\le n}$ is
\begin{equation}\label{eq:detSlice}
 \det(\Phi_{j\bar\ell})(0,w)
 =\bigl(A+pxL'(x)\bigr)^k\bigl(L'(x)+xL''(x)\bigr).
\end{equation}
\end{lemma}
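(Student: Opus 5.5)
We compute the complex Hessian of $\Phi=\mathrm{const}-A\log u+L(x)$ directly, with $u=1-|z|^2$ and $x=|w|^2u^{-p}$, and then specialize to $z=0$. The point is that at $z=0$ every first derivative of $u$ vanishes, so the Hessian becomes block diagonal. The determinant should then factor into the $k$ equal eigenvalues of the $z$-block times the single $w$-entry, which is exactly the form of \eqref{eq:detSlice}.

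First record the derivatives of the auxiliary quantities. For $u$ we have
\[
u_j=-\bar z_j,\qquad u_{j\bar\ell}=-\delta_{j\ell}.
\]
For $x$ in the $z$-directions, $x_j=-p\,x\,u^{-1}u_j$, which vanishes at $z=0$. For the second derivative, $x_{j\bar\ell}$ equals $p\,x\,u^{-1}\delta_{j\ell}$ plus terms containing $u_j$ or $u_{\bar\ell}$; these vanish at $z=0$, where $u=1$, so $x_{j\bar\ell}=p\,x\,\delta_{j\ell}$ there. The mixed derivatives are
\[
x_{w\bar\ell}=-p\,u^{-p-1}\,\bar w\,u_{\bar\ell}=0\quad\text{at } z=0,
\]
and similarly $x_{j\bar w}=0$. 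In the $w$-direction, $x_w=\bar w u^{-p}$ and $x_{w\bar w}=u^{-p}$, which equals $1$ at $z=0$.

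Next apply the chain rule:
\[
\Phi_{a\bar b}=-A\left(\frac{u_{a\bar b}}{u}-\frac{u_au_{\bar b}}{u^2}\right)+L'(x)\,x_{a\bar b}+L''(x)\,x_a x_{\bar b}.
\]
At $(0,w)$ this gives three pieces.
\begin{itemize}
\item The $z$-block is $\Phi_{j\bar\ell}=(A+pxL'(x))\,\delta_{j\ell}$, since $-A\,u_{j\bar\ell}/u=A\delta_{j\ell}$ and the $x_jx_{\bar\ell}$ term vanishes.
\item The mixed entries $\Phi_{j\bar w}$ vanish, because $u_j=0$, $x_j=0$ and $x_{j\bar w}=0$; the same holds for $\Phi_{w\bar\ell}$.
\item The $w$-entry is $\Phi_{w\bar w}=L'(x)+L''(x)\,|w|^2=L'(x)+xL''(x)$.
\end{itemize}
Taking the determinant of this block-diagonal matrix gives \eqref{eq:detSlice}.

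There is no real obstacle here. The only step needing care is the bookkeeping in $x_{j\bar\ell}$ and $x_{j\bar w}$: one must check that every term other than $p\,x\,u^{-1}\delta_{j\ell}$ carries a factor $u_j$ or $u_{\bar\ell}$, i.e. a factor of $z$ or $\bar z$, and so vanishes on the slice. Smoothness of $L$ on $0\le x<1$ is guaranteed because $H_{n,m}>0$ there: its constant term is $(\mu)_k>0$ and all its coefficients are positive.
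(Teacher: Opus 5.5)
Your proposal is correct and is essentially the paper's argument. You compute the complex Hessian of $\Phi$ at $(0,w)$ by the chain rule and note that every first $z$-derivative of $u$ and $x$ vanishes there. The matrix is therefore block diagonal, with $z$-block $(A+pxL'(x))\delta_{j\ell}$ and weak entry $L'(x)+xL''(x)$, and you take the determinant; your bookkeeping of $x_{j\bar\ell}$ and $x_{j\bar w}$ just spells out what the paper states in one line each.
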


\begin{proof}
Write $r=|z|^2$, so $u=1-r$ and $x=|w|^2u^{-p}$.  At $z=0$ one has $u=1$ and $\partial x/\partial z_j=0$.  For $1\le j,\ell\le k$,
\[
 \Phi_{z_j\bar z_\ell}(0,w)=\bigl(A+pxL'(x)\bigr)\delta_{j\ell}.
\]
The mixed derivatives $\Phi_{z_j\bar w}(0,w)$ vanish because they contain a factor $\bar z_j$.  Finally, for the weak coordinate,
\[
 \Phi_{w\bar w}(0,w)=L'(x)+xL''(x),
\]
since $x=|w|^2$ on the slice $z=0$.  Thus the Hermitian matrix is block diagonal at $(0,w)$, with a scalar $k\times k$ block and the scalar weak block above.  Taking the determinant gives \eqref{eq:detSlice}.
\end{proof}

\begin{lemma}[Exact determinant and scalar curvature on the weak-normal slice]\label{lem:exactDetScalar}
Let
\begin{align*}
 \Phi(z,w)&=\log S_{n,m}((z,w),(z,w))
 =\mathrm{constant}-A\log u+L(x),\\
 L(x)&=\log H_{n,m}(x),
\end{align*}
where $u=1-|z|^2$, $x=|w|^2u^{-p}$, and
\[
 a(x)=A+pxL'(x),
 \qquad
 b(x)=L'(x)+xL''(x).
\]
Then, wherever the metric is evaluated in $E_{n,m}$,
\begin{equation}\label{eq:exactDet}
 \det(\Phi_{j\bar\ell})=u^{-(k+p+1)}a(x)^k b(x).
\end{equation}
Consequently, at points $(0,w)$, $x=|w|^2$, the scalar curvature is
\begin{equation}\label{eq:scalarSliceFormula}
\begin{aligned}
 \Scal_{FS}(0,w)
 &=-\frac{k\bigl(k+p+1+pxM'(x)\bigr)}{a(x)}
   -\frac{M'(x)+xM''(x)}{b(x)},\\
 M(x)&=k\log a(x)+\log b(x).
\end{aligned}
\end{equation}
In particular $\Scal_{FS}(0,w)$ is a rational function of $x$.
\end{lemma}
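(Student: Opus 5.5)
The plan is to get \eqref{eq:exactDet} from the slice computation of Lemma \ref{lem:detSlice} by using automorphisms of $E_{n,m}$, and then to read off the scalar curvature from the fact that $\log\det$ has the same structural form as $\Phi$.

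\emph{Step 1: the automorphisms.} For $a\in\C^k$ with $|a|<1$, let $\varphi_a$ be the involutive ball automorphism of $\mathbb B^k$ that exchanges $a$ and $0$. Consider
\[
 F_a(z,w)=\Bigl(\varphi_a(z),\ \frac{(1-|a|^2)^{p/2}}{(1-\ip{z}{a})^{p}}\,w\Bigr).
\]
The standard identity
\[
1-|\varphi_a(z)|^2=\frac{(1-|a|^2)(1-|z|^2)}{|1-\ip{z}{a}|^2}
\]
gives the transformation rules
\[
u\circ F_a=\frac{(1-|a|^2)\,u}{|1-\ip{z}{a}|^2},\qquad x\circ F_a=x .
\]
In particular $F_a$ preserves $E_{n,m}$, since membership is exactly $0\le x<1$.

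\emph{Step 2: invariance of the Levi form of $\Phi$.} Since $\Phi=\text{const}-A\log u+L(x)$, we get
\[
\Phi\circ F_a=\Phi-A\log(1-|a|^2)+A\log|1-\ip{z}{a}|^2 .
\]
The difference from $\Phi$ is pluriharmonic, so $F_a^*\,\partial\bar\partial\Phi=\partial\bar\partial\Phi$. Taking determinants gives
\[
\det(\Phi_{j\bar\ell})(z,w)=|\det F_a'(z,w)|^2\,\det(\Phi_{j\bar\ell})(F_a(z,w)).
\]

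\emph{Step 3: evaluate at a slice point.} Choose $a=z$, so that $F_z(z,w)=(0,w')$ with $|w'|^2=x$. The Jacobian of $F_a$ is triangular: its $w$-entry is the scalar factor multiplying $w$, and the $z$-block is $\varphi_a'$. At $z=a$ the standard ball formula gives $|\det\varphi_a'(a)|^2=(1-|a|^2)^{-(k+1)}$. The $w$-factor there equals $(1-|a|^2)^{-p/2}$, whose square is $u^{-p}$. Hence $|\det F_z'|^2=u^{-(k+1+p)}$. Combining this with Lemma \ref{lem:detSlice} at $(0,w')$ yields \eqref{eq:exactDet}.

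\emph{Step 4: the scalar curvature.} I will use $\Scal=-g^{j\bar\ell}\partial_j\partial_{\bar\ell}\log\det(g)$, with the normalization implicit in \eqref{eq:scalarSliceFormula}. By \eqref{eq:exactDet},
\[
\log\det(\Phi_{j\bar\ell})=-(k+p+1)\log u+M(x).
\]
This has exactly the structure of $\Phi$, with $A$ replaced by $k+p+1$ and $L$ by $M$. Therefore the slice computation in Lemma \ref{lem:detSlice} applies verbatim at $(0,w)$. The complex Hessian of $\log\det$ there is block diagonal, with scalar block $(k+p+1+pxM'(x))I_k$ and weak entry $M'(x)+xM''(x)$. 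The metric at the same point is $\diag(a(x)I_k,\,b(x))$, again by Lemma \ref{lem:detSlice}. Taking the trace against the inverse metric gives \eqref{eq:scalarSliceFormula}.

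\emph{Rationality.} By \eqref{eq:Hdef}, $H_{n,m}$ is rational in $x$, so $L'=H'/H$ and $L''$ are rational. Hence $a$ and $b$ are rational, and so are $M'=k\,a'/a+b'/b$ and $M''$. Therefore $\Scal_{FS}(0,w)$ is a rational function of $x$.

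\emph{Main obstacle.} The step needing most care is Step 3: getting the Jacobian exponent exactly right, and checking that the pluriharmonic correction in Step 2 involves only $\log|1-\ip{z}{a}|^2$ with no dependence on $w$. A direct brute-force differentiation of $\Phi$ using a rank-one perturbation of the Hessian would be the fallback if the automorphism bookkeeping runs into trouble.
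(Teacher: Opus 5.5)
Your proof is correct, but it takes a different route from the paper's.

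\textbf{The paper's route.} The paper proves \eqref{eq:exactDet} by direct differentiation. It views $\Phi$ as a function of $r=|z|^2$ and $t=|w|^2$ and writes the complex Hessian in its $U(k)\times U(1)$ radial form. It then uses the block-determinant identity $\Phi_r^{k-1}\bigl((\Phi_r+r\Phi_{rr})(\Phi_t+t\Phi_{tt})-rt\Phi_{rt}^2\bigr)$ and computes each derivative in terms of $a$ and $b$. The result follows from two cancellations: $A+p(p+1)xL'+p^2x^2L''-p^2xb=a$, and $1+r/u=1/u$.

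\textbf{Your route.} You never compute the Hessian off the slice. Instead you use the lifted ball automorphisms $F_a$. These fix $x$ and change $\Phi$ only by a constant plus the pluriharmonic term $A\log|1-\ip{z}{a}|^2$. Hence $\partial\bar\partial\Phi$ is $F_a$-invariant, and $\det(\Phi_{j\bar\ell})$ transforms by $|\det F_a'|^2$. Taking $a=z$ moves any point to the slice $z=0$ without changing $x$, and Lemma \ref{lem:detSlice} finishes.

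\textbf{Bookkeeping check.} The bookkeeping is right:
\begin{itemize}
\item $|(1-\ip{z}{a})^{p}|^2=|1-\ip{z}{a}|^{2p}$ for any branch, since $p$ is real, so $x\circ F_a=x$.
\item The Jacobian is block triangular, so its determinant is $\det\varphi_a'$ times the $w$-multiplier.
\item $|\det\varphi_a'(a)|^2=(1-|a|^2)^{-(k+1)}$.
\item The $w$-multiplier at $z=a$ has squared modulus $u^{-p}$.
\end{itemize}
Together these give exactly $u^{-(k+p+1)}$.

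\textbf{Trade-offs.} Your approach explains where the exponent $k+p+1$ comes from. It is the Jacobian weight of the automorphism: the ball weight $k+1$ plus the weak-direction weight $p$. It also avoids the algebraic cancellation entirely. The paper's computation is self-contained calculus: it needs neither the automorphism group nor the ball Jacobian identity. It also yields the full Hessian at every point, not just its determinant.

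\textbf{Scalar curvature and rationality.} These steps are identical in both proofs. Once $\log\det(\Phi_{j\bar\ell})=-(k+p+1)\log u+M(x)$ is known globally, the slice computation applies with $A\to k+p+1$ and $L\to M$. One small point left implicit in both versions: defining $M=k\log a+\log b$ needs $a,b>0$ on $[0,1)$. This follows from the positive-definiteness of $\omega_{FS}$ at the slice points, where the metric is $\diag(aI_k,b)$.
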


\begin{proof}
Put $r=|z|^2$ and $t=|w|^2$, so that $u=1-r$ and $x=tu^{-p}$.  Since $\Phi$ depends only on $r$ and $t$, its Hermitian matrix has the usual $U(k)\times U(1)$ radial form:
\[
 \Phi_{z_i\bar z_j}=\Phi_r\delta_{ij}+\Phi_{rr}\bar z_i z_j,
 \qquad
 \Phi_{z_i\bar w}=\Phi_{rt}\bar z_iw,
 \qquad
 \Phi_{w\bar w}=\Phi_t+t\Phi_{tt}.
\]
The determinant of this block matrix is
\begin{equation}\label{eq:radialBlockDet}
 \det(\Phi_{j\bar\ell})
 =\Phi_r^{k-1}\left((\Phi_r+r\Phi_{rr})(\Phi_t+t\Phi_{tt})
       -rt\Phi_{rt}^2\right).
\end{equation}
The required derivatives are
\[
 \Phi_r=\frac{a(x)}u,
 \qquad
 \Phi_t=u^{-p}L'(x),
 \qquad
 \Phi_t+t\Phi_{tt}=u^{-p}b(x),
\]
\[
 \Phi_{rt}=pu^{-p-1}b(x),
\]
and
\[
 \Phi_r+r\Phi_{rr}
 =\frac{a(x)}u+\frac r{u^2}
 \left(A+p(p+1)xL'(x)+p^2x^2L''(x)\right).
\]
Substituting these identities into \eqref{eq:radialBlockDet}, the expression inside the parentheses becomes
\begin{align*}
&\left[\frac{a}u+\frac r{u^2}
 \left(A+p(p+1)xL'+p^2x^2L''\right)\right]u^{-p}b
 -rtp^2u^{-2p-2}b^2                                      \\
&\quad =u^{-p-1}b\left[a+\frac r u
 \left(A+p(p+1)xL'+p^2x^2L''-p^2xb\right)\right].
\end{align*}
Since $b=L'+xL''$, the expression in the last large parentheses is
\[
 A+p(p+1)xL'+p^2x^2L''-p^2x(L'+xL'')=A+pxL'=a.
\]
Thus the block determinant equals
\[
 (a/u)^{k-1}\,u^{-p-1}b\,a\left(1+\frac r u\right)
 =(a/u)^{k-1}u^{-p-1}ba\frac1u
 =u^{-(k+p+1)}a^kb,
\]
which proves \eqref{eq:exactDet}.

Now set
\[
 \log\det(\Phi_{j\bar\ell})=-(k+p+1)\log u+M(x),
 \qquad M=k\log a+\log b.
\]
At $z=0$ one has $u=1$, $x=t=|w|^2$, and $\partial x/\partial z_j=0$.  Therefore
\[
 (\log\det g)_{z_j\bar z_j}(0,w)=k+p+1+pxM'(x),
\]
while
\[
 (\log\det g)_{w\bar w}(0,w)=M'(x)+xM''(x).
\]
The Ricci form is $-\sqrt{-1}\partial\bar\partial\log\det g$.  Since the metric at $(0,w)$ is diagonal with $k$ equal $z$-entries $a(x)$ and weak entry $b(x)$, taking the trace with respect to the metric gives \eqref{eq:scalarSliceFormula}.  Finally $H_{n,m}=P_{k,\mu}/(1-x)^n$ is rational, and every function entering \eqref{eq:scalarSliceFormula} is obtained from it by algebraic operations and differentiation; hence the scalar curvature on this slice is rational in $x$.
\end{proof}

\begin{lemma}[Constant numerator criterion for the Fefferman--Szegő slice]\label{lem:Pconstant}
Assume $n\ge2$.  The polynomial $P_{k,\mu}$ in \eqref{eq:Hdef} is constant if and only if $p=1$, equivalently $m=1$.
\end{lemma}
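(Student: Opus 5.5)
We follow the root-comparison argument of Lemma \ref{lem:BergmanDiag}, now applied to the Fefferman--Szegő coefficients. By \eqref{eq:Hdef} and Lemma \ref{lem:genfunction},
\[
 \sum_{\beta\ge0}(\mu+p\beta)_k x^\beta=\frac{P_{k,\mu}(x)}{(1-x)^{k+1}},\qquad k+1=n.
\]
If $P_{k,\mu}\equiv c$ is constant, then $c=P_{k,\mu}(1)=k!p^k$. Expanding $(1-x)^{-k-1}=\sum_\beta \frac{(\beta+1)_k}{k!}x^\beta$, comparing coefficients gives the polynomial identity in $\beta$
\[
 (\mu+p\beta)_k=p^k(\beta+1)_k .
\]
The converse direction reduces to the same identity: if it holds, the generating function is $k!p^k(1-x)^{-k-1}$, so $P_{k,\mu}$ is constant. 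The lemma is therefore equivalent to the statement that this identity holds iff $p=1$.

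Next compare the roots. The left side is $p^k\prod_{j=0}^{k-1}\bigl(\beta+\tfrac{\mu+j}{p}\bigr)$, with roots $-\mu/p,\,-(\mu+1)/p,\ldots,-(\mu+k-1)/p$. These form an arithmetic progression with common difference $1/p$. The right side has roots $-1,-2,\ldots,-k$, with common difference $1$.

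The main obstacle is the case $k=1$, i.e.\ $n=2$, where there is only a single root and no common difference to compare. We therefore avoid relying on the differences and compare the first roots directly: equality of the root multisets forces $\mu/p=1$, i.e.\ $\mu=p$. Substituting $\mu=(1+pn)/(n+1)$, the condition $\mu=p$ becomes $1+pn=p(n+1)$, hence $p=1$. This uses the specific value of $\mu$ and works for all $n\ge2$. For $k\ge2$ the common differences give $p=1$ as well, which is consistent. Multiplicities cause no trouble, since both root lists consist of distinct numbers.

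For the converse, if $p=1$ then $\mu=(1+n)/(n+1)=1$. The identity $(1+\beta)_k=(\beta+1)_k$ holds trivially, so $P_{k,1}\equiv k!$ is constant. Finally, $p=1/m$, so $p=1$ is equivalent to $m=1$.
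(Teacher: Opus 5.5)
Your proof is correct and uses the same root comparison as the paper for the identity $(\mu+p\beta)_k=C(\beta+1)_k$. The only difference is that you match the largest roots, $-\mu/p=-1$ (valid since $p>0$ orders both progressions the same way), and then use $\mu=(1+pn)/(n+1)$ for every $k$. That is exactly the paper's $k=1$ computation, and applying it uniformly removes the paper's separate $k\ge2$ step comparing common differences.
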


\begin{proof}
If $P_{k,\mu}$ is constant, then the coefficient sequence in \eqref{eq:Hdef} is a constant multiple of the coefficient sequence of $(1-x)^{-n}$.  Therefore, as polynomials in $\beta$,
\begin{equation}\label{eq:rootpolyFS}
 (\mu+p\beta)_k=C(\beta+1)_k
\end{equation}
for some positive constant $C$.

When $k=1$, the numerator in \eqref{eq:Hdef} is
\[
 P_{1,\mu}(x)=\mu+(p-\mu)x.
\]
If it is constant, then $p=\mu$.  Since $\mu=(1+2p)/3$ in this case, we get $p=1$.

It remains to consider $k\ge2$.  The roots of the left side of \eqref{eq:rootpolyFS} are
\[
 -\frac{\mu}{p},\ -\frac{\mu+1}{p},\ldots,\ -\frac{\mu+k-1}{p},
\]
while the roots of the right side are
\[
 -1,-2,\ldots,-k.
\]
Equality of the two root sets gives equality of two finite arithmetic progressions of length at least two.  Their common differences must therefore agree, so $1/p=1$.  Thus $p=1$.

Conversely, if $p=1$, then $\mu=(1+n)/(n+1)=1$ and
\[
 (\mu+\beta p)_k=(\beta+1)_k.
\]
Hence
\[
 H_{n,1}(x)=\sum_{\beta\ge0}(\beta+1)_k x^\beta
 =k!(1-x)^{-k-1}=k!(1-x)^{-n},
\]
so $P_{k,1}$ is the constant polynomial $k!$.
\end{proof}

\begin{theorem}[Kähler--Einstein and Bergman-proportionality corollary]\label{thm:KERigidityHigher}
Let $n\ge2$ and $m\ge1$.  For the egg domain $E_{n,m}$, the following are equivalent:
\begin{enumerate}[label=\textup{(\roman*)}]
\item $m=1$, so $E_{n,m}$ is the unit ball $\mathbb B^n$;
\item the Fefferman--Szegő metric $\omega_{FS}$ is Kähler--Einstein;
\item the Fefferman--Szegő metric is a positive constant multiple of the Bergman metric $\omega_B=\sqrt{-1}\,\partial\bar\partial\log K_{n,m}(z,z)$.
\end{enumerate}
When these conditions hold,
\[
 \omega_{FS}=\frac{n}{n+1}\omega_B,
 \qquad
 \Ric(\omega_{FS})=-\frac{n+1}{n}\omega_{FS}.
\]
\end{theorem}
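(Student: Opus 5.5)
The plan is to prove (i)$\Rightarrow$(ii),(iii) by direct computation on the ball. For the converses, I would reduce everything to the weak-normal slice $z=0$ and show there that a non-constant numerator produces a non-constant function of $x$.

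\emph{The ball case.} For $m=1$ we have $p=\mu=1$ and $A=n$. Lemma \ref{lem:Pconstant} gives $H_{n,1}=k!(1-x)^{-n}$. Since $u(1-x)=1-|z|^2-|w|^2$, \eqref{eq:FSdiagH} yields
\[
S_{n,1}=c\,(1-|z|^2-|w|^2)^{-n},
\]
which is $c\,\rho_{\mathbb B}^{-n}$ with $\rho_{\mathbb B}=1-|z|^2-|w|^2$. Lemma \ref{lem:BergmanDiag} gives $K_{n,1}=c'\,\rho_{\mathbb B}^{-(n+1)}$. Hence $\omega_{FS}=n\,\omega_0$ and $\omega_B=(n+1)\,\omega_0$, where $\omega_0=-\sqrt{-1}\partial\bar\partial\log\rho_{\mathbb B}$. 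This gives $\omega_{FS}=\frac{n}{n+1}\omega_B$. The Bergman metric of the ball satisfies $\Ric(\omega_B)=-\omega_B$, and the Ricci form is invariant under constant scaling of the metric. Therefore
\[
\Ric(\omega_{FS})=-\omega_B=-\tfrac{n+1}{n}\,\omega_{FS}.
\]

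\emph{(ii)$\Rightarrow$(i).} A Kähler--Einstein metric has constant scalar curvature, so $\Scal_{FS}(0,w)$ from Lemma \ref{lem:exactDetScalar} is constant in $x\in[0,1)$. Rather than analyse the rational function in full, I would use the Einstein equation itself on the slice. Comparing the $z_j\bar z_j$ entries of $\Ric=-\lambda\omega$ at $(0,w)$ gives
\[
k+p+1+pxM'(x)=\lambda\,a(x).
\]
Comparing the $w\bar w$ entries gives
\[
(xM')'=\lambda\,(xL')'.
\]
Integrating the second relation and using regularity at $x=0$ gives $xM'=\lambda xL'$. Substituting into the first, with $a=A+pxL'$, gives $k+p+1=\lambda A$. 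From $M=\lambda L+\mathrm{const}$ we then get
\[
a^k b=c\,H^{\lambda}, \qquad \lambda=\frac{k+p+1}{A}.
\]
I then compare the behaviour as $x\to1^-$. We have $H\sim (1-x)^{-n}$, so $L'\sim n/(1-x)$, $a\sim pn/(1-x)$ and $b\sim n/(1-x)^2$. The left side therefore has exponent $-(k+2)=-(n+1)$ while the right side has exponent $-n\lambda$. This forces
\[
n+1=\frac{n(k+p+1)}{A}=\frac{(n+1)(n+p)}{n+p}=n+1,
\]
which is no obstruction, so this leading-order test gives nothing. The main obstacle is exactly here: I would have to go further and exploit the zeros of $P_{k,\mu}$. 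Write $H=P/(1-x)^n$. Then $a^kb$ is a rational function whose poles occur only at the zeros of $P$ and at $x=1$, while $H^\lambda$ has branch behaviour at the zeros of $P$ unless $P$ is constant. Suppose $P$ has a zero $x_0\ne1$ (possibly complex, since $P(1)\ne0$). Near $x_0$, $L'$ has a simple pole, so $a^kb$ has a pole of order $k+2$ at $x_0$. But $H^\lambda$ has only a zero or an algebraic branch of order $\lambda\cdot\ord_{x_0}P$, not a pole. Both sides extend analytically from $[0,1)$ into the complex plane, so this is a contradiction. Hence $P$ is constant, and Lemma \ref{lem:Pconstant} gives $m=1$. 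If $\deg P=0$ there is nothing to do.

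\emph{(iii)$\Rightarrow$(i).} If $\omega_{FS}=c\,\omega_B$, then $\log S-c\log K$ is pluriharmonic. It is also $U(k)\times U(1)$-invariant, hence constant along the slice $z=0$ as a function of $x$; more precisely $xL'(x)$ equals $c\,x(\log G)'(x)$ plus a constant, and at $x=0$ that constant is $0$. Thus $H=\mathrm{const}\cdot G^c$. Matching exponents at $x=1$ gives $n=c(n+1)$. Comparing zeros as above, $P^{n+1}$ must be proportional to $Q^{n}(1-x)^{\ldots}$, and the only pole at $x=1$ already matches. Taking the $(n+1)$-st power makes both sides polynomials, and every zero of $P$ must be a zero of $Q$ with multiplicities in ratio $n/(n+1)$. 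I expect the cleanest route to instead use the Einstein property: the ball Bergman metric is KE, but for $m>1$ I would avoid that and argue directly. If $P$ and $Q$ are nonconstant, unique factorization of $P^{n+1}=c\,Q^n$ forces each root multiplicity of $P$ to be divisible by $n$ and $\deg P=n\deg Q/(n+1)\le k$. That means $\deg Q\le n+1-\ldots$, and with $\deg P\ge n$ needed for any nonconstant root, this contradicts $\deg P\le k=n-1$. So $P$ is constant, and Lemma \ref{lem:Pconstant} applies.
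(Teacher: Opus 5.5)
Your proof is correct. Its skeleton is the paper's: compute on the ball, restrict to the weak-normal slice $z=0$, obtain an exact power identity in $\C(x)$, then apply Lemma~\ref{lem:Pconstant}. Two steps, however, are carried out differently.

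\emph{(ii)$\Rightarrow$(i).} The paper finds $\lambda$ by restricting the invariant pluriharmonic function $\log\det(\Phi_{j\bar\ell})+\lambda\Phi$ to the weak line $(se_1,0)$. It then uses the normal line to get $a^kb=C_1H^{(n+1)/n}$, and rules out a nonconstant $P_{k,\mu}$ by coprimality: $P^{n+1}=R^n$ gives $(n+1)d=nq$, impossible for $1\le d\le n-1$.

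You instead read off the $z_j\bar z_j$ and $w\bar w$ components of the Einstein equation at $(0,w)$, using the entries computed in Lemma~\ref{lem:exactDetScalar}. Integrating $(xM')'=\lambda(xL')'$ from $x=0$ yields both $\lambda=(n+p)/A=(n+1)/n$ and the slice identity from the normal line alone. Your contradiction is then local: at a zero $x_0$ of $P_{k,\mu}$ of multiplicity $d$, $a^kb$ has a pole of exact order $k+2$, while $|H^\lambda|=|H|^\lambda\to0$. This is essentially the mechanism of Theorem~\ref{thm:constantScalarRigidity}, and it avoids both the degree bound $\deg P_{k,\mu}\le k$ and the coprimality trick.

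It does, however, need $x_0\neq0$, so that the leading coefficients $px_0d$ of $a$ and $-x_0d$ of $b$ are nonzero. State that this holds because $P_{k,\mu}(0)=(\mu)_k>0$. Also make ``both sides extend analytically'' precise: raise to the $n$th power to get $(a^kb)^n=c^nH^{n+1}$ in $\C(x)$, and compare orders at $x_0$, namely $-n(n+1)$ against $(n+1)d>0$.

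\emph{(iii)$\Rightarrow$(i).} You get $c=n/(n+1)$ from the $x\to1^-$ asymptotics rather than from the weak line. This uses $Q_{n,p}(1)=n!\,p^n\neq0$, which you should note. You then run the multiplicity argument on $P_{k,\mu}$ (every root multiplicity divisible by $n$, against $\deg P_{k,\mu}\le n-1$) rather than on $Q_{n,p}$ (divisible by $n+1$, against $\deg Q_{n,p}\le n$). Either works.

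As written, though, that paragraph contains abandoned fragments that should simply be deleted: the remark about using the Einstein property of the ball's Bergman metric, and the line ``$\deg Q\le n+1-\ldots$''. The complete argument is short. The relation $H=C\,G^{n/(n+1)}$ gives exactly $P_{k,\mu}^{\,n+1}=C^{n+1}Q_{n,p}^{\,n}$ as polynomials. So any root of $P_{k,\mu}$ would have multiplicity at least $n>\deg P_{k,\mu}$. Hence $P_{k,\mu}$ is constant, and Lemma~\ref{lem:Pconstant} gives $m=1$.
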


\begin{proof}
If $m=1$, then $E_{n,1}=\mathbb B^n$.  The Szegő kernel for the spherical boundary measure is a positive constant times $(1-\langle Z,W\rangle)^{-n}$, while the Bergman kernel is a positive constant times $(1-\langle Z,W\rangle)^{-n-1}$.  Hence
\[
 \omega_{FS}=n\sqrt{-1}\,\partial\bar\partial[-\log(1-|Z|^2)],
 \qquad
 \omega_B=(n+1)\sqrt{-1}\,\partial\bar\partial[-\log(1-|Z|^2)],
\]
which gives $\omega_{FS}=\frac{n}{n+1}\omega_B$.  The complex hyperbolic metric
$\sqrt{-1}\partial\bar\partial[-\log(1-|Z|^2)]$ has Ricci form $-(n+1)$ times itself, so
$\Ric(\omega_{FS})=-(n+1)n^{-1}\omega_{FS}$.

Now assume that $\omega_{FS}$ is Kähler--Einstein.  Thus
\[
 \Ric(\omega_{FS})=\lambda\omega_{FS}
\]
for a real constant $\lambda$.  Since
\[
 \Ric(\omega_{FS})=-\sqrt{-1}\,\partial\bar\partial\log\det(\Phi_{j\bar\ell}),
 \qquad
 \omega_{FS}=\sqrt{-1}\,\partial\bar\partial\Phi,
\]
with $\Phi=\log S_{n,m}(z,z)$, the function
\begin{equation}\label{eq:plurihKE}
 R_\lambda:=\log\det(\Phi_{j\bar\ell})+\lambda\Phi
\end{equation}
 is pluriharmonic.  It is also invariant under the natural $U(k)\times U(1)$ action, because both the domain and the kernel are invariant under that action.  The restriction of an invariant pluriharmonic function to the complex line
\[
 \{(se_1,0): |s|<1\}
\]
 is a radial harmonic function of one complex variable and is therefore constant.

On that line, Theorem \ref{thm:weakexponent} gives
\[
 \Phi(se_1,0)=\mathrm{constant}-A\log(1-|s|^2).
\]
Put $u=1-|s|^2$.  At $w=0$, the $z$-block of the Hermitian matrix is
\[
 A\,\partial\bar\partial[-\log(1-|z|^2)],
\]
whose determinant in $k$ complex variables is $A^k u^{-k-1}$.  The weak component is
\[
 \Phi_{w\bar w}(se_1,0)=L'(0)u^{-p},
\]
where $L'(0)>0$ because it is the ratio of the first two positive coefficients of $H_{n,m}$.  Mixed terms vanish at $w=0$.  Hence
\[
 \det(\Phi_{j\bar\ell})(se_1,0)=C_0u^{-(k+1+p)}=C_0(1-|s|^2)^{-(n+p)}
\]
for some $C_0>0$.  Since $R_\lambda$ is constant on the line, the coefficients of $\log(1-|s|^2)$ must cancel:
\[
 -(n+p)-\lambda A=0.
\]
Using $A=n(n+p)/(n+1)$, we get
\begin{equation}\label{eq:lambdafixed}
 \lambda=-\frac{n+p}{A}=-\frac{n+1}{n}.
\end{equation}
Therefore
\[
 \log\det(\Phi_{j\bar\ell})-\frac{n+1}{n}\Phi
\]
 is pluriharmonic and invariant, hence its restriction to the line $\{(0,w): |w|<1\}$ is constant.  Applying Lemma \ref{lem:detSlice} on that line gives
\begin{equation}\label{eq:KEsliceIdentity}
 \bigl(A+pxL'(x)\bigr)^k\bigl(L'(x)+xL''(x)\bigr)
 =C_1 H_{n,m}(x)^{(n+1)/n},
 \qquad 0\le x<1,
\end{equation}
for some $C_1>0$.
The left side of \eqref{eq:KEsliceIdentity} is a rational function of $x$, because $H_{n,m}$ is rational.  Since
\[
 H_{n,m}(x)=\frac{P_{k,\mu}(x)}{(1-x)^n},
\]
the right side is
\[
 C_1\frac{P_{k,\mu}(x)^{(n+1)/n}}{(1-x)^{n+1}}.
\]
We now make the algebraic obstruction explicit.  Since the left side of \eqref{eq:KEsliceIdentity} is rational and the factor $(1-x)^{-(n+1)}$ is rational, there is a rational function $R(x)$ such that
\[
 P_{k,\mu}(x)^{(n+1)/n}=R(x)
\]
on the interval $0<x<1$ after taking the positive real branch.  Raising to the $n$th power gives the polynomial identity
\begin{equation}\label{eq:KEpoweridentity}
 P_{k,\mu}(x)^{n+1}=R(x)^n
\end{equation}
as an identity of rational functions, because two rational functions agreeing on a nonempty interval agree identically.  If $P_{k,\mu}$ were nonconstant, choose a complex zero $a$ of multiplicity $d\ge1$.  Since $\deg P_{k,\mu}\le k=n-1$, one has $1\le d\le n-1$.  Comparing the order of vanishing at $a$ in \eqref{eq:KEpoweridentity} gives
\[
 (n+1)d=nq
\]
for the integer $q=\operatorname{ord}_a R$.  Since $\gcd(n,n+1)=1$, this forces $n\mid d$, impossible because $1\le d\le n-1$.  Hence $P_{k,\mu}$ is constant.  By Lemma \ref{lem:Pconstant}, $p=1$, so $m=1$.

It remains to prove proportionality rigidity.  Assume
\[
 \omega_{FS}=c\,\omega_B
\]
for some $c>0$.  Then
\[
 \Phi-c\Psi
\]
 is pluriharmonic, where $\Psi=\log K_{n,m}(z,z)$.  Again this function is $U(k)\times U(1)$-invariant.  On the weak line $(se_1,0)$, formulas \eqref{eq:FSdiagH} and \eqref{eq:Bergdiag} give
\[
 \Phi(se_1,0)=\mathrm{constant}-A\log(1-|s|^2),
\]
\[
 \Psi(se_1,0)=\mathrm{constant}-(n+p)\log(1-|s|^2).
\]
Since the difference is constant on that line,
\[
 c=\frac{A}{n+p}=\frac{n}{n+1}.
\]
Restricting now to the weak-normal line $(0,w)$ gives
\begin{equation}\label{eq:propSliceIdentity}
 H_{n,m}(x)=C_2G_{n,m}(x)^{n/(n+1)},
 \qquad 0\le x<1,
\end{equation}
for some $C_2>0$.  The left side is rational.  By Lemma \ref{lem:BergmanDiag},
\[
 G_{n,m}(x)=\frac{Q_{n,p}(x)}{(1-x)^{n+1}}.
\]
Thus there is a rational function $T(x)$ such that
\[
 Q_{n,p}(x)^{n/(n+1)}=T(x)
\]
on $0<x<1$.  Raising to the $(n+1)$st power gives the rational-function identity
\begin{equation}\label{eq:proppoweridentity}
 Q_{n,p}(x)^n=T(x)^{n+1}.
\end{equation}
If $Q_{n,p}$ were nonconstant, choose a complex zero $a$ of multiplicity $d\ge1$.  Since $\deg Q_{n,p}\le n$, one has $1\le d\le n$.  Comparing orders at $a$ in \eqref{eq:proppoweridentity} gives
\[
 nd=(n+1)q
\]
for the integer $q=\operatorname{ord}_aT$.  Since $\gcd(n,n+1)=1$, this forces $(n+1)\mid d$, impossible because $1\le d\le n$.  Hence $Q_{n,p}$ is constant.  Lemma \ref{lem:BergmanDiag} gives $p=1$, equivalently $m=1$.

The implications have now all been proved.
\end{proof}

\begin{theorem}[Scalar and Ricci-spectrum rigidity]\label{thm:constantScalarRigidity}
Let $n\ge2$ and $m\ge1$.  If the Fefferman--Szegő metric on $E_{n,m}$ has constant scalar curvature, then $m=1$.  Consequently, if its Ricci endomorphism has constant eigenvalues, then $m=1$.  Conversely, for $m=1$ the scalar curvature and the Ricci eigenvalues are constant.
\end{theorem}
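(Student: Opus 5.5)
The plan is to reduce constant scalar curvature to the constant-numerator criterion of Lemma \ref{lem:Pconstant}, working entirely on the weak-normal slice $\{(0,w)\}$ with Lemma \ref{lem:exactDetScalar}. The two easy parts come first. If the Ricci endomorphism has constant eigenvalues, then its trace, which is the scalar curvature, is constant, so the Ricci-spectrum statement follows from the scalar statement. For $m=1$, Theorem \ref{thm:KERigidityHigher} gives $\omega_{FS}=\frac{n}{n+1}\omega_B$ on $\mathbb B^n$. This is $n$ times the complex hyperbolic metric, which is Kähler--Einstein with $\Ric=-\frac{n+1}{n}\omega_{FS}$. Its Ricci eigenvalues are therefore all $-\frac{n+1}{n}$, and its scalar curvature is $-(n+1)$, both constant.

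For the main implication, assume $\Scal_{FS}\equiv c$. By \eqref{eq:scalarSliceFormula}, $\Scal_{FS}(0,w)$ is a rational function $\mathcal S(x)$ of $x=|w|^2$, and it equals $c$ on $[0,1)$, hence identically. I will write $L=\log P_{k,\mu}-n\log(1-x)$ and set $y=1-x$. Since $P_{k,\mu}(1)=k!/m^k\neq0$ by Lemma \ref{lem:genfunction}, the function $\ell(x):=\log P_{k,\mu}(x)$ is analytic near $x=1$. Then
\[
L'=\frac{n}{y}+\ell',
\]
and the leading orders are
\[
a(x)=\frac{pn}{y}+O(1),\qquad b(x)=\frac{n}{y^2}+O(1/y).
\]
I then expand $\mathcal S$ as a Laurent/Taylor series in $y$.

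The first key step is the leading term. Computing it fixes $c$, and the answer should be the ball value $-(n+1)$. This is a useful consistency check: at leading order the slice only sees the $-n\log(1-x)$ part of $L$, which coincides with the $m=1$ potential. The second key step is an induction on $j\ge1$. Suppose $\ell^{(i)}(1)=0$ for all $i<j$. Then the coefficient of $y^{j}$ (suitably shifted) in $\mathcal S(y)-c$ should be a nonzero universal constant, depending only on $n,p,j$, times $\ell^{(j)}(1)$. Any lower-order or nonlinear contributions involve only $\ell^{(i)}(1)$ with $i<j$, so they vanish by hypothesis. Constancy then forces $\ell^{(j)}(1)=0$ for all $j$. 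Hence $\log P_{k,\mu}$ is constant near $1$, so $P_{k,\mu}$ is constant, and Lemma \ref{lem:Pconstant} gives $p=1$, that is, $m=1$.

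The main obstacle is verifying that the linear coefficient of $\ell^{(j)}(1)$ in the expansion of \eqref{eq:scalarSliceFormula} is nonzero for every $j$. The variable $\ell$ enters through $a$, through $b$, and through $M=k\log a+\log b$ up to four derivatives. I would first linearize $\mathcal S$ around the model solution $L_0=-n\log(1-x)$. That reduces the question to a linear ODE operator acting on $\ell$ with Euler-type leading behaviour at $y=0$. The coefficient in question is then the indicial polynomial of that operator evaluated at the relevant exponent, and I would show this indicial polynomial has no positive integer roots. If it does have such a root $j_0$ for some $(n,p)$, the fallback is to handle that single order separately. One option is to compare with the value of $\mathcal S$ at $x=0$, computed from the first Taylor coefficients $(\mu)_k,(\mu+p)_k,\dots$ of $H_{n,m}$. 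Another is to use the finite degree bound $\deg P_{k,\mu}\le k$ to rule out a nonconstant $P_{k,\mu}$ whose logarithm is pinned at only that order.
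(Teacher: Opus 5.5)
Your easy parts (Ricci spectrum $\Rightarrow$ scalar curvature, the ball case, restriction to the slice $(0,w)$, and $c=-(n+1)$ from $x\to1^-$) agree with the paper. The gap is the induction at $x=1$. It is worse than the indicial-root worry you flag: the claim that constancy forces $\ell^{(j)}(1)=0$ is already false for $j=1$.

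\textbf{Why the induction fails.} For $p\neq1$, the model $L=-n\log(1-x)$ does not give constant scalar curvature, because $A-pn=n^2(1-p)/(n+1)\neq0$. So the expansion contains an inhomogeneous $(n,p)$-dependent term that your induction hypothesis does not remove. Carrying out your expansion with $y=1-x$ gives
\[
\mathcal S(y)=-(n+1)-\frac{n-1}{p}\Bigl(\frac{(n-1)(p-1)}{n+1}-\frac{2p\,\ell'(1)}{n}\Bigr)y+O(y^2).
\]
Constancy therefore forces $\ell'(1)=\frac{n(n-1)(p-1)}{2p(n+1)}\neq0$, not $\ell'(1)=0$.

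\textbf{The true kernel passes this test.} The value just found is exactly the true value of $P_{k,\mu}'(1)/P_{k,\mu}(1)$. For instance, for $n=2$ one has $P_{1,\mu}=\mu+(p-\mu)x$ with $\mu=(1+2p)/3$, giving $\ell'(1)=(p-1)/(3p)$. So the actual kernel satisfies the first-order condition and no contradiction arises there. For $n=2$ one checks further that the $y^2$ coefficient contains no $\ell''(1)$ at all and is a multiple of the $y$ coefficient. It therefore also vanishes for the true kernel, and your order-by-order determination breaks down at $j=2$.

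This is not accidental. The limit $x\to1$ corresponds to strongly pseudoconvex boundary points, where the Fefferman normalization makes the metric agree with the ball model beyond leading order. An expansion there is the wrong place to look for an obstruction. Your fallbacks are not carried out either; for example, $\mathcal S(0)\neq-(n+1)$ for all $n\ge2$, $m\ge2$ is an unverified inequality among Pochhammer ratios.

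\textbf{The missing idea.} Expand at a complex zero of $P_{k,\mu}$ instead of at $x=1$. Once $\mathcal S\equiv-(n+1)$ in $\C(x)$, its (removable) value at every point must be $-(n+1)$. This includes a zero $\alpha$ of $P_{k,\mu}$ of multiplicity $d$; note $\alpha\neq0$ because $P_{k,\mu}(0)=(\mu)_k>0$. Writing $\varepsilon=x-\alpha$, one has
\[
L'=\frac d\varepsilon+O(1),\qquad a=\frac{p\alpha d}{\varepsilon}+O(1),\qquad b=-\frac{\alpha d}{\varepsilon^2}+O(1),\qquad M'=-\frac{k+2}{\varepsilon}+O(1).
\]
The two terms of \eqref{eq:scalarSliceFormula} then tend to $k(k+2)/d$ and $(k+2)/d$, so the value at $\alpha$ is $n(n+1)/d>0\neq-(n+1)$. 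Hence $P_{k,\mu}$ is constant, and Lemma \ref{lem:Pconstant} gives $m=1$. The local behaviour at such a zero is fixed by the multiplicity $d$ alone. That is why this point detects non-constancy, while the boundary point $x=1$ is tuned by the Fefferman measure to look like the ball.
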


\begin{proof}
We first prove the scalar-curvature assertion.  Use the notation of Lemma \ref{lem:exactDetScalar}.  Along the weak-normal line $(0,w)$, put $x=|w|^2$.  The scalar curvature is the rational function $\Scal_{FS}(x)$ given by \eqref{eq:scalarSliceFormula}.

As $x\to1^-$, Lemma \ref{lem:genfunction} gives $P_{k,\mu}(1)=k!/m^k>0$, so
\begin{align*}
 H_{n,m}(x)&=\frac{P_{k,\mu}(x)}{(1-x)^n},\\
 L'(x)&=\frac n{1-x}+O(1),\\
 L''(x)&=\frac n{(1-x)^2}+O(1).
\end{align*}
Hence
\[
 a(x)=\frac{pn}{1-x}+O(1),
 \qquad
 b(x)=\frac n{(1-x)^2}+O\left((1-x)^{-1}\right),
\]
\[
 M'(x)=\frac{n+1}{1-x}+O(1),
 \qquad
 M''(x)=\frac{n+1}{(1-x)^2}+O\left((1-x)^{-1}\right).
\]
Substitution into \eqref{eq:scalarSliceFormula} gives
\begin{equation}\label{eq:scalarStrongLimit}
 \lim_{x\to1^-}\Scal_{FS}(x)=-(n+1).
\end{equation}
Therefore, if the scalar curvature is constant on $E_{n,m}$, then the rational function $\Scal_{FS}(x)$ agrees with $-(n+1)$ on the interval $0<x<1$.  Hence it agrees with $-(n+1)$ as an identity in the rational function field $\C(x)$.  In particular, any apparent singularity of this rational function at a complex zero of $P_{k,\mu}$ is removable and its removable value must be $-(n+1)$.

We show that this forces $P_{k,\mu}$ to be constant.  Suppose instead that $P_{k,\mu}$ is nonconstant.  Since $P_{k,\mu}(0)=(\mu)_k>0$ and $P_{k,\mu}(1)>0$, it has a complex zero $\alpha$ with $\alpha\ne0,1$.  Let $d\ge1$ be its multiplicity.  Near $x=\alpha$, write $\varepsilon=x-\alpha$.  Then
\[
 L'(x)=\frac d\varepsilon+O(1),
 \qquad
 L''(x)=-\frac d{\varepsilon^2}+O(1).
\]
Since $\alpha\ne0$,
\[
 a(x)=A+pxL'(x)=\frac{p\alpha d}{\varepsilon}+O(1),
 \qquad
 b(x)=L'(x)+xL''(x)=-\frac{\alpha d}{\varepsilon^2}+O(1).
\]
Consequently
\[
 M'(x)=-\frac{k+2}{\varepsilon}+O(1),
 \qquad
 M''(x)=\frac{k+2}{\varepsilon^2}+O(1).
\]
Using \eqref{eq:scalarSliceFormula}, the removable value of the rational scalar-curvature function at $x=\alpha$ is
\begin{align*}
\lim_{x\to\alpha}\Scal_{FS}(x)
&=-k\lim_{x\to\alpha}\frac{k+p+1+pxM'(x)}{a(x)}
  -\lim_{x\to\alpha}\frac{M'(x)+xM''(x)}{b(x)}        \\
&= -k\left(-\frac{k+2}{d}\right)-\left(-\frac{k+2}{d}\right)
 =\frac{(k+1)(k+2)}d
 =\frac{n(n+1)}d.
\end{align*}
This number is positive, while the constant value forced by the rational-function identity is $-(n+1)$.  This contradiction proves that $P_{k,\mu}$ is constant.  By Lemma \ref{lem:Pconstant}, $p=1$, equivalently $m=1$.

If the Ricci endomorphism has constant eigenvalues, then their sum, the scalar curvature, is constant.  The preceding paragraph therefore gives $m=1$.

Finally, when $m=1$, $E_{n,1}=\mathbb B^n$ and the first paragraph of the proof of Theorem \ref{thm:KERigidityHigher} computes
\[
 \omega_{FS}=\frac n{n+1}\omega_B,
 \qquad
 \Ric(\omega_{FS})=-\frac{n+1}{n}\omega_{FS}.
\]
Thus the scalar curvature and the Ricci eigenvalues are constant.
\end{proof}

\section{Final remarks}

The paper has only one main calculation: the Fefferman--Szegő kernel of $E_{n,m}$.  The monomial norm formula gives the kernel series, and the integrality of $k=n-1$ turns that series into the auxiliary-variable closed form \eqref{eq:rationalkernel}.  The weak-boundary exponent and the scalar-curvature rigidity theorem are consequences of this formula, not separate methods.

The result leaves several natural extensions open.  Diagonal multi-eggs should be accessible by the same monomial method, while genuinely coupled weighted Reinhardt models require new coefficient estimates and are not claimed here.  This is intentional: the present paper proves the one-axis theorem cleanly and avoids unsupported positivity assertions beyond that setting.

\end{document}